
%

\documentclass{amsart}
\usepackage{framed, color}
\definecolor{shadecolor}{rgb}{1,0.8,0.3}

\newtheorem{theorem}{Theorem}[section]
\newtheorem{lemma}[theorem]{Lemma}

\theoremstyle{definition}

\theoremstyle{remark}
\newtheorem{remark}[theorem]{Remark}

\numberwithin{equation}{section}

\begin{document}

\title[Kazdan-Warner equations]{Existence of solutions to a class of Kazdan-Warner equations on finite graphs}


\author{Yi Li}
\address{School of Mathematics and Shing-Tung Yau Center of Southeast University, Southeast University, Nanjing, 21189, China}
\email{yilicms@gmail.com, yilicms@seu.edu.cn}

\author{Qianwei Zhang}
\address{School of Mathematics, Southeast University, Nanjing, 21189, China}
\email{qianweizhang@seu.edu.cn}


\keywords{}

\date{}

\dedicatory{}

\begin{abstract}
Let $G=(V, E)$ be a connected finite graph, $h$ be a positive function on $V$ and $\lambda _{1}(V)$ be the first non-zero eigenvalue of $-\Delta$. For any given finite measure $\mu$ on $V$, define functionals 
\begin{eqnarray*}
    J_{ \beta }(u)&=&\frac{1}{2}\int_{V}|\nabla u|^{2}d \mu -\beta \log\int_{V}he^{u}d \mu,\\ 
    J_{ \alpha ,\beta }(u)&=&\frac{1}{2}\int_{V}\left(|\nabla u|^{2}- \alpha u^{2}\right) d \mu -\beta \log\int_{V}he^{u}d \mu 
\end{eqnarray*}
on the functional space
$$
{\bf H}= \left\{ u\in{\bf W}^{1,2}(V) \Bigg| \int_{V}u\!\ d\mu =0 \right\}. 
$$

For any $\beta \in  \mathbb{R}$,  we show that $J_{ \beta }(u)$ has a minimizer $u\in{\bf H}$, and then, based on variational principle, the Kazdan-Warner equation 
$$
\Delta u=-\frac{\beta he^{u}}{\displaystyle{\int_{V}he^{u}d \mu }}+\frac{\beta }{\text{Vol}(V)}
$$
has a  solution in ${\bf H}$. 

If $\alpha <  \lambda _{1}(V)$, then for any $\beta \in  \mathbb{R} , J_{ \alpha ,\beta }(u)$ has a minimizer in ${\bf H}$, thus the Kazdan-Warner equation
$$
\Delta u+\alpha\!\ u=-\frac{\beta he^{u}}{\displaystyle{\int_{V}he^{u}d \mu }}+\frac{\beta }{\text{Vol}(V)}
$$
has a  solution in ${\bf H}$.  If $\alpha > \lambda _{1}(V)$, then for any $\beta \in  \mathbb{R}$, $\displaystyle{\inf_{u\in{\bf H}} J_{ \alpha ,\beta }(u) =- \infty}$. When $\alpha=\lambda_{1}(V)$, the situation becomes complicated: if $\beta=0$, the corresponding equation is $-\Delta u=\lambda_{1}(V)u$ which has a solution in ${\bf H}$ obviously; if $\beta>0$, then $\displaystyle{\inf_{u\in {\bf H}} J_{\alpha,\beta }(u) =- \infty}$; if $\beta<0$, $J_{ \alpha ,\beta }(u)$ has a minimizer in some subspace of ${\bf H}$. 

Moreover, we consider the same problem where higher eigenvalues are involved. 
\end{abstract}

\maketitle

\section{Introduction}

Suppose that $(\Sigma ,g)$ is a compact Riemannian surface without boundary, $h$ is a positive smooth function on $\Sigma$ and ${\bf W}^{1,2}(\Sigma)$ is the usual Sobolev space. Define the following functional space, where $dv_{g}$ is the area form of $g$,
\begin{equation*}
    H=\left \{u\in{\bf W}^{1,2}(\Sigma) \Bigg| \int_{\Sigma}u\!\ dv_{g}=0\right \}.
\end{equation*}
Define functionals 
\begin{eqnarray*}
    J_{ \beta }(u)&=&\frac{1}{2}\int_{\Sigma}|\nabla_{g} u|^{2}dv_{g} -\beta \log\int_{\Sigma}he^{u}dv_{g},\\
    J_{ \alpha ,\beta }(u)&=&\frac{1}{2}\int_{\Sigma}\left(|\nabla u|^{2}- \alpha u^{2}\right)dv_{g} -\beta \log\int_{\Sigma}he^{u}dv_{g}
\end{eqnarray*}
where $\nabla_{g} u$ denotes the gradient of $u$. 
In view of \cite{curvature}, if the functional $J_{ \beta }(u)$ or $ J_{ \alpha ,\beta }(u)$ has a minimizer $u\in{\bf H}$, then based on the method of Lagrange multipliers the minimizer is a solution of the Kazdan-Warner equation
\begin{equation}
    \Delta_{g}u=-\frac{\beta he^{u}}{\displaystyle{\int_{\Sigma}he^{u}dv_{g} }}+\frac{\beta }{\text{Vol}(\Sigma)},
\label{kw11}
\end{equation}
or
\begin{equation}
     \Delta_{g} u+\alpha u=-\frac{\beta he^{u}}{\displaystyle{\int_{\Sigma}he^{u}d \mu }}+\frac{\beta }{\text{Vol}(\Sigma)},
\label{kw22} 
\end{equation}
where $\Delta_{g}$ is the Laplace-Beltrami operator of $g$. Considering the Trudinger-Moser inequality in \cite{borderline}, we see that $J_{ \beta }(u)$ has a minimizer in ${\bf H}$ for any $\beta < 8\pi $. When $\beta = 8\pi$, Ding et al. in \cite{equation} proved that under some geometric condition $J_{8\pi }(u)$ has a minimizer in ${\bf H}$. However, if $\beta > 8\pi $, then we have $\displaystyle{\inf_{u\in{\bf H}} J_{\beta }(u) =- \infty}$.
Let $\lambda _{1}(\Sigma)$ be the first non-zero eigenvalue of $ -\Delta _{g}$. Referring to \cite{surface}, we have the following assertions:
(i) If $\beta < 8\pi $, then for any $\alpha \in \mathbb{R}$, $J_{\alpha ,\beta }(u)$ has a minimizer in ${\bf H}$;
(ii) If $\beta > 8\pi $, then for any $\alpha \in \mathbb{R}$, we have $\displaystyle{\inf_{u\in {\bf H}} J_{\alpha ,\beta }(u) =- \infty}$;
(iii) If $\alpha <  \lambda _{1}(\Sigma )$, then under some geometric condition $ J_{ \alpha ,8\pi }(u)$ has a minimizer in ${\bf H}$; If $\alpha \geq \lambda _{1}(\Sigma )$, then $\displaystyle{\inf_{u\in{\bf H}} J_{\alpha ,8\pi  }(u) =- \infty}$.
As in \cite{extremal, blow}, $\alpha $ is allowed to be larger than higher eigenvalues. And the same problem has similar results (see \cite{surface}). 

${}$

Several people have studied Kazdan-Warner equation  
\begin{equation}
    \Delta u=c-he^{u}
\label{c}
\end{equation}
on graphs. In \cite{graph}, Grigor'yan, Lin and Yang have given some necessary and sufficient conditions for the existence of solutions to \eqref{c} on finite graphs when $c >0$ or $c=0$. However, when $c<0$, the situation is much more complicated: If the solution exists, then $\overline{h}<0$, where $\overline{h}$ is the average of $h$; conversely, if $\overline{h}<0$, then one can find solutions when $c$ lies in $(c_{-}(h), 0)$ for some negative number $c_{-}(h)$ (see also \cite{graph}). In Ge's paper \cite{negative},  the case of $c=c_{-}(h)$ is under consideration and the corresponding equation of \eqref{c} has a solution. In \cite{infinite}, the Kazdan-Warner equation \eqref{c} is considered on infinite graphs and one can find some sufficient conditions after using a heat method which refered to \cite{heat}.
Furthermore, Ge also extended to the $p$-Laplacian on graph and studied the $p$-th Kazdan-Warner equation $\Delta_{p}(u)=c-he^{u}$ on a finite graph \cite{p}. 

${}$

In this paper, we consider existence of solutions to the Kazdan-Warner equations $\eqref{kw11}$ and $\eqref{kw22}$ on finite graphs. One can find that on compact Riemann surface, $\beta$ are assumed to be greater than 0. However, we intend to consider the case of $\beta \leq 0$ on finite graph. In our setting, we shall prove the following : 
(i) For any $\beta \in  \mathbb{R}$,  $ J_{ \beta }(u)$ has a minimizer $u\in{\bf H}$;
(ii) If $\alpha <  \lambda _{1}(V)$, then for any $\beta \in  \mathbb{R} , J_{ \alpha ,\beta }(u)$ has a minimizer $u$ in ${\bf H}$; If $\beta=0$, the corresponding equation is $-\Delta u=\lambda_{1}(V)u$ which has a solution in $H$ obviously. If $\beta>0$, then $\displaystyle{\inf_{u\in {\bf H}} J_{ \lambda_{1}(V) ,\beta }(u) =- \infty}$. If $\beta<0$, $J_{ \alpha ,\beta }(u)$ has a minimizer in some subspace of $H$. 
(iii) The similar results establish where higher eigenvalues are involved.

We organized this paper as follows: In Section 2, we introduce some notations on graphs and state our main results. In Section 3--7, we prove our results.

Introduction

\section{Notations and results}

Let $G=(V, E)$ be a finite graph, where $V$ denotes the vertex set and $E$ denotes the edge set.
Throughout this paper, all graphs are assumed to be connected. For any edge $xy\in E$, we assume that its weight $w_{xy}> 0$ and that $w_{xy}=w_{yx}$. Let $\mu :V\rightarrow \mathbb{R}^{+}$ be a finite measure. For any function $u :V\rightarrow \mathbb{R}$, the $\mu$-Laplacian (or Laplacian for short) of $u$ is defined by
\begin{equation*}
    \Delta u(x)=\frac{1}{\mu (x)}\sum_{y\sim x}w_{xy}\left[u(y)-u(x)\right],
\end{equation*}
where $y\sim x$ means $xy\in E$. The associated gradient form reads
\begin{equation*} 
    \Gamma (u,v)=\frac{1}{2\mu (x)}\sum_{y\sim x}w_{xy}\left[u(y)-u(x)\right]\left[v(y)-v(x)\right].
\end{equation*}
Write $ \Gamma (u)= \Gamma (u,u)$. We denote the length of its gradient by
\begin{equation*}
   |\nabla u|(x)=\sqrt{\Gamma (u(x))}=\left\{\frac{1}{2\mu (x)}\sum_{y\sim x}w_{xy}\left[u(y)-u(x)\right]^{2}\right\}^{\frac{1}{2}}.
\end{equation*}
For any function $g :V\rightarrow \mathbb{R}$, an integral of $g$ over $V$ is defined by 
\begin{equation}
    \int_{V}g\!\ d\mu =\sum_{x\in V}\mu (x)g(x).
\label{int}
\end{equation}
We let
$$
\text{Vol}(V)=\sum_{x\in V}\mu (x)
$$
be the volume of $V$ relative to the measure $\mu$.
Let ${\bf F}(V)$ be the space of all functions on $V$.
Define the Sobolev space ${\bf W}^{1,2}(V)$ and the associated norm by 
\begin{equation*}
    {\bf W}^{1,2}(V)=\left \{ u \in {\bf F}(V)\Bigg|\int_{V}\left(|\nabla u|^{2}+u^{2}\right)d \mu < + 
 \infty  \right \} 
\end{equation*}
and 
\begin{equation*}
    \left \|u \right \|_{{\bf W}^{1,2}(V)}=\left[\int_{V}\left(|\nabla u|^{2}+u^{2}\right)d \mu \right]^{\frac{1}{2}},
\end{equation*}
respectively. Define a subspace
\begin{equation}
    {\bf H}= \left\{ u\in{\bf W}^{1,2}(V)\Bigg| \int_{V}u\!\ d\mu =0 \right \}
\label{H}
\end{equation}
of ${\bf W}^{1,2}(V)$. 

Let $h$ be a given positive function on $V$ and $\beta\in\mathbb{R}$ be a given constant.
We first consider the functional $J_{ \beta }: {\bf W}^{1,2}(V) \rightarrow  \mathbb{R}$ defined by
\begin{equation}
    J_{ \beta }(u)=\frac{1}{2}\int_{V}|\nabla u|^{2}d \mu -\beta \log\int_{V}he^{u}d \mu.
\label{J_beta}    
\end{equation}

Our first result is the following.
\begin{theorem}
Let $G=(V, E)$ be a finite graph, h be a positive function on V, and H and $J_{ \beta }$ be defined as in \eqref{H} and \eqref{J_beta} ,respectively. Then for any $\beta \in  \mathbb{R}$,  $J_{ \beta }$ has a minimizer $u\in{\bf H}$. Moreover, the Kazdan-Warner equation
\begin{equation}
     \Delta u=-\frac{\beta he^{u}}{\displaystyle{\int_{V}he^{u}d \mu}}+\frac{\beta }{{\rm Vol}(V)}
\label{kw_1} 
\end{equation}
has a  solution $u\in{\bf H}$.
\label{Th1}
\end{theorem}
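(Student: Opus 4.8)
The plan is to use the direct method of the calculus of variations, exploiting the fact that on a finite graph the ambient space $\mathbf{W}^{1,2}(V)=\mathbf{F}(V)$ is finite dimensional. Consequently all norms on it are equivalent, closed and bounded subsets are compact, and $J_\beta$ is continuous (note $\int_V he^u\,d\mu>0$ for every $u$ since $h>0$, so the logarithm is always well defined), while $\mathbf{H}$ is a closed hyperplane. Thus, once I establish that $J_\beta$ is coercive on $\mathbf{H}$, a minimizing sequence will be bounded, hence will have a convergent subsequence whose limit lies in $\mathbf{H}$ and realizes the infimum by continuity. All the compactness difficulties (Trudinger--Moser, blow-up) that force the threshold $\beta<8\pi$ on a surface simply do not arise here.

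The heart of the matter, and the only genuine estimate, is coercivity. First I would record the Poincar\'e inequality on $\mathbf{H}$: for $u\in\mathbf{H}$,
\[
\int_V u^2\,d\mu \le \frac{1}{\lambda_1(V)}\int_V|\nabla u|^2\,d\mu,
\]
so that $\|\nabla u\|_{2}$ controls $\|u\|_{\mathbf{W}^{1,2}(V)}$, and hence by norm equivalence controls $\|u\|_\infty$; that is, $\|u\|_\infty\le C\|\nabla u\|_{2}$ for a constant $C=C(G,\mu)$. Since $e^{-\|u\|_\infty}\le e^{u(x)}\le e^{\|u\|_\infty}$ pointwise, one has
\[
e^{-\|u\|_\infty}\int_V h\,d\mu \;\le\; \int_V he^{u}\,d\mu \;\le\; e^{\|u\|_\infty}\int_V h\,d\mu,
\]
whence $\bigl|\log\int_V he^{u}\,d\mu\bigr|\le \|u\|_\infty+C'\le C\|\nabla u\|_{2}+C'$. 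Substituting into $J_\beta$ gives
\[
J_\beta(u)\ge \tfrac12\|\nabla u\|_{2}^{2}-|\beta|\,C\|\nabla u\|_{2}-|\beta|\,C',
\]
a quadratic in $\|\nabla u\|_{2}$ with positive leading coefficient. Hence $J_\beta$ is bounded below and tends to $+\infty$ as $\|u\|_{\mathbf{W}^{1,2}(V)}\to\infty$ on $\mathbf{H}$, for every $\beta\in\mathbb{R}$. The crucial point, and the reason all values of $\beta$ are admissible, is that $|\log\int_V he^{u}\,d\mu|$ grows at most linearly in $\|\nabla u\|_{2}$ while the Dirichlet energy grows quadratically.

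Granting coercivity and continuity, the direct method yields a minimizer $u\in\mathbf{H}$. To obtain the Kazdan--Warner equation I would compute the first variation along $u+t\phi$ with $\phi\in\mathbf{H}$: using the graph Green formula $\int_V\Gamma(u,\phi)\,d\mu=-\int_V(\Delta u)\phi\,d\mu$, stationarity gives
\[
\int_V\Bigl(-\Delta u-\beta\frac{he^{u}}{\int_V he^{u}\,d\mu}\Bigr)\phi\,d\mu=0\qquad\text{for all }\phi\in\mathbf{H}.
\]
Since $\mathbf{H}$ is precisely the $L^{2}(\mu)$-orthogonal complement of the constants, the bracketed function must equal a constant $c$. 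Integrating the identity $-\Delta u-\beta he^{u}/\!\int_V he^{u}\,d\mu=c$ over $V$ and using $\int_V\Delta u\,d\mu=0$ (so the Laplacian term drops while the second term integrates to $-\beta$) yields $c\,\mathrm{Vol}(V)=-\beta$, i.e. $c=-\beta/\mathrm{Vol}(V)$, which rearranges to exactly \eqref{kw_1}. The main obstacle is therefore localized entirely in the coercivity estimate; the existence of the minimizer and the Euler--Lagrange derivation are essentially automatic thanks to finite dimensionality.
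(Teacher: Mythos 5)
Your proposal is correct, and your Euler--Lagrange derivation at the end is the same as the paper's (first variation, the identification of $\mathbf{H}^{\perp}$ with the constants, then integration over $V$ to fix the Lagrange constant). Where you genuinely diverge is in the coercivity estimate, which is indeed the heart of the matter. The paper splits into two cases: for $\beta\geq 0$ it invokes the graph Trudinger--Moser inequality (Lemma \ref{Le3}) to get $\int_{V}he^{u}d\mu\leq C\exp\left(\varepsilon\left\|\nabla u\right\|_{L^{2}(V)}^{2}\right)$ and must then choose $\varepsilon=1/(4\beta)$ so that the quadratic exponent is absorbed by half of the Dirichlet energy; for $\beta<0$ it proves the lower bound $\int_{V}he^{u}d\mu\geq C_{1}\exp\left(C_{2}\left\|\nabla u\right\|_{L^{2}(V)}\right)$ using the elementary chain $\left\|u\right\|_{L^{\infty}(V)}\leq\mu_{\min}^{-1/2}\left\|u\right\|_{L^{2}(V)}\leq C\left\|\nabla u\right\|_{L^{2}(V)}$. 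You observe that this same elementary chain controls $\log\int_{V}he^{u}d\mu$ from \emph{both} sides, giving the two-sided bound $\bigl|\log\int_{V}he^{u}d\mu\bigr|\leq C\left\|\nabla u\right\|_{L^{2}(V)}+C'$ and hence coercivity for every $\beta\in\mathbb{R}$ in one stroke, with no case split and no Trudinger--Moser input. This is strictly simpler and makes transparent why no threshold on $\beta$ appears on a finite graph: the nonlinear term grows at most linearly in $\left\|\nabla u\right\|_{L^{2}(V)}$ while the energy grows quadratically. The paper's route has the advantage of running parallel to the compact-surface argument, where $\left\|u\right\|_{L^{\infty}}$ is not controlled by $\left\|\nabla u\right\|_{L^{2}}$ and the exponential-of-quadratic Moser bound is the only available tool (which is exactly what produces the $8\pi$ threshold there). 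Both arguments conclude identically via a bounded minimizing sequence and compactness (Lemma \ref{Le1}).
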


Let $\lambda _{1}(V)$ be the first non-zero eigenvalue of $-\Delta$, say 
\begin{equation}
    \lambda _{1}(V)= \inf_{u\in{\bf H}, \int_{V}u^{2}d\mu=1} \int_{V}|\nabla u|^{2}d \mu.
\label{lamda1}
\end{equation}
If $\alpha <  \lambda _{1}(V)$, then
\begin{equation*}
      \left \|u \right \|_{1, \alpha }:=\left[\int_{V}\left(|\nabla u|^{2}- \alpha u^{2}\right)d \mu\right]^{\frac{1}{2}}
\end{equation*}
defines a Sobolev norm on ${\bf H}$. Since ${\bf H}$ is a finite dimensional linear space, $  \left \| \cdot \right \|_{1, \alpha }$ is equivalent to the Sobolev norm $\left \|  \cdot \right \|_{{\bf W}^{1,2}(V)}$. 

To achieve an analog of \eqref{kw_1}, we turn to the following functional  
\begin{equation}
     J_{ \alpha ,\beta }(u)=\frac{1}{2}\int_{V}\left(|\nabla u|^{2}- \alpha u^{2}\right) d \mu -\beta \log\int_{V}he^{u}d\mu 
\label{J_a,b} 
\end{equation}
where $\alpha , \beta$ are given constants.
Now we state an analog of Theorem \ref{Th1} as follows.

\begin{theorem}{\bf ($\boldsymbol{J_{\alpha, \beta}$: $\alpha<\lambda_{1}(V)}$)}
Let $G=(V, E)$ be a finite graph, h be a positive function on V, and ${\bf H}$, $\lambda _{1}(V)$ and $ J_{ \alpha ,\beta }$ be defined as in \eqref{H}, \eqref{lamda1} and \eqref{J_a,b}, respectively. Then for any $\alpha <  \lambda _{1}(V), \beta \in  \mathbb{R}$, $J_{ \alpha ,\beta }$ has a minimizer $u$ in ${\bf H}$. Moreover, the Kazdan-Warner equation
\begin{equation}
     \Delta u+\alpha u=-\frac{\beta he^{u}}{\displaystyle{\int_{V}he^{u}d \mu}}+\frac{\beta }{{\rm Vol}(V)}
\label{kw_2} 
\end{equation}
has a  solution $u\in{\bf H}$.
\label{Th2}
\end{theorem}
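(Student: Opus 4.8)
The plan is to run the direct method of the calculus of variations, taking full advantage of the fact that ${\bf H}$ is \emph{finite-dimensional}: on such a space all norms are equivalent and bounded sequences are precompact, so the only real work is to show that $J_{\alpha,\beta}$ is bounded below and coercive on $({\bf H},\|\cdot\|_{1,\alpha})$. The hypothesis $\alpha<\lambda_1(V)$ enters precisely at this point, since (as recorded just before the statement) it makes $\|\cdot\|_{1,\alpha}$ a genuine norm equivalent to $\|\cdot\|_{{\bf W}^{1,2}(V)}$; consequently the quadratic part $\tfrac12\|u\|_{1,\alpha}^2$ is positive definite and should dominate the logarithmic term. The overall structure mirrors the proof of Theorem \ref{Th1}, the new feature being that the indefinite piece $-\alpha u^2$ is absorbed into the equivalent norm.

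First I would fix the positive constants $h_{\min}=\min_V h>0$ and $h_{\max}=\max_V h$ and control $\log\int_V he^u\,d\mu$ from both sides. For an upper bound, $\int_V he^u\,d\mu\le h_{\max}\,\mathrm{Vol}(V)\,e^{\max_V u}$, and since $\max_V u\le\|u\|_\infty\le C\|u\|_{1,\alpha}$ by equivalence of the norms $\|\cdot\|_\infty$ and $\|\cdot\|_{1,\alpha}$ on ${\bf H}$, one obtains $\log\int_V he^u\,d\mu\le C\|u\|_{1,\alpha}+C'$. For a lower bound, Jensen's inequality for the probability measure $d\mu/\mathrm{Vol}(V)$ together with $\int_V u\,d\mu=0$ gives $\int_V e^u\,d\mu\ge\mathrm{Vol}(V)$, hence $\log\int_V he^u\,d\mu\ge\log\!\left(h_{\min}\mathrm{Vol}(V)\right)$. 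I expect this two-sided control of the nonlinear term to be the main (and essentially the only) obstacle; the rest is soft.

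With these estimates coercivity follows by splitting on the sign of $\beta$. For $\beta\ge0$ the upper bound yields $J_{\alpha,\beta}(u)\ge\tfrac12\|u\|_{1,\alpha}^2-\beta C\|u\|_{1,\alpha}-C''$, while for $\beta<0$ the lower bound yields $J_{\alpha,\beta}(u)\ge\tfrac12\|u\|_{1,\alpha}^2+|\beta|\log\!\left(h_{\min}\mathrm{Vol}(V)\right)$. In both cases the right-hand side is bounded below and tends to $+\infty$ as $\|u\|_{1,\alpha}\to\infty$, so $m:=\inf_{\bf H}J_{\alpha,\beta}>-\infty$ is finite and every minimizing sequence $\{u_k\}$ lies in a bounded subset of ${\bf H}$. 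By finite-dimensionality a subsequence converges to some $u^{\ast}\in{\bf H}$, and since $J_{\alpha,\beta}$ is continuous (the denominator $\int_V he^u\,d\mu$ never vanishes) we get $J_{\alpha,\beta}(u^{\ast})=m$, i.e.\ $u^{\ast}$ is a minimizer.

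Finally I would read off the Euler--Lagrange equation. Differentiating $t\mapsto J_{\alpha,\beta}(u^{\ast}+t\phi)$ at $t=0$ for arbitrary $\phi\in{\bf H}$ and using the graph Green formula $\int_V\Gamma(u^{\ast},\phi)\,d\mu=-\int_V\phi\,\Delta u^{\ast}\,d\mu$ gives
$$
\int_{V}\left(-\Delta u^{\ast}-\alpha u^{\ast}-\frac{\beta he^{u^{\ast}}}{\int_V he^{u^{\ast}}\,d\mu}\right)\phi\,d\mu=0\qquad\text{for all }\phi\in{\bf H}.
$$
Since this holds for every mean-zero $\phi$, the bracketed function is constant; integrating it over $V$ and using $\int_V\Delta u^{\ast}\,d\mu=0$, $\int_V u^{\ast}\,d\mu=0$, and $\int_V\frac{\beta he^{u^{\ast}}}{\int_V he^{u^{\ast}}\,d\mu}\,d\mu=\beta$, the constant is forced to equal $-\beta/\mathrm{Vol}(V)$. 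Rearranging produces exactly \eqref{kw_2}, which completes the proof.
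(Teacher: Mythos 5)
Your proposal is correct and reaches the same conclusion by the same overall skeleton (direct method plus Euler--Lagrange with the observation that a function orthogonal to all mean-zero functions is constant), but your coercivity estimates take a genuinely more elementary route than the paper's. For the upper bound on the nonlinear term the paper invokes its Trudinger--Moser-type Lemma \ref{Le3} together with the Young-type splitting $u\le \varepsilon\|u\|_{1,\alpha}^{2}+u^{2}/(4\varepsilon\|u\|_{1,\alpha}^{2})$, obtaining $\int_{V}he^{u}d\mu\le C e^{\varepsilon\|u\|_{1,\alpha}^{2}}$ and then choosing $\varepsilon=1/(4\beta)$ so that the quadratic term survives with coefficient $1/4$; you instead use the chain $\|u\|_{L^{\infty}}\le \mu_{\min}^{-1/2}\|u\|_{L^{2}}\le C\|u\|_{1,\alpha}$, valid because $V$ is finite and ${\bf H}$ is finite-dimensional, to get a bound that is merely \emph{linear} in $\|u\|_{1,\alpha}$, which beats the quadratic leading term without any tuning of $\varepsilon$. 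For the lower bound ($\beta<0$) the paper's Lemma 3.4 gives $\int_{V}he^{u}d\mu\ge C_{1}\exp(-C\|\nabla u\|_{L^{2}})$, whereas your Jensen argument with the normalized measure $d\mu/\mathrm{Vol}(V)$ and $\int_{V}u\,d\mu=0$ yields the sharper, $u$-independent bound $\int_{V}he^{u}d\mu\ge h_{\min}\mathrm{Vol}(V)$, making that case immediate. Your version buys simplicity and transparency by exploiting the discreteness of the setting; the paper's version is heavier but runs parallel to the argument on a compact Riemann surface, where neither the $L^{\infty}$--$L^{2}$ embedding nor the resulting linear bound is available. Both are complete proofs; the closing Euler--Lagrange computation, including the identification of the Lagrange constant as $-\beta/\mathrm{Vol}(V)$, matches the paper's.
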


Let $\lambda _{1}(V)< \lambda _{2}(V)< \cdots< \lambda_{m-1}(V)$ be all distinct non-zero eigenvalues of $-\Delta$,  $E_{\lambda _{k}(V)}$ be the eigenfuction space with respect to $\lambda _{k}(V)$, $\{u_{ki}\}_{i=1}^{n_{k}}$ be an orthonormal basis of $ E_{\lambda _{k}(V)} (k=1, \cdots, m-1)$. Define 
\begin{equation}
\begin{aligned}
     E_{k}&:=E_{\lambda _{1}(V)} \bigoplus \cdots \bigoplus E_{\lambda _{k}(V)},\\
    E_{k}^{\perp } &:= \text{the complement of} \ E_{k} \ = \ \left \{ u \in{\bf H} \Bigg|\int_{V}uv\!\ d \mu =0, \ \forall \ v \in E_{k}\right \}.
\label{E_kp} 
\end{aligned}  
\end{equation}
Now we consider the case of $\alpha \geq \lambda _{1}(V)$. 
If $\alpha > \lambda _{1}(V)$, one can find $\inf_{u\in {\bf H}} J_{ \alpha ,\beta }(u) =- \infty $. However, when $\alpha = \lambda _{1}(V)$, our conclusion becomes complicated.

\begin{theorem}{\bf ($\boldsymbol{J_{\alpha, \beta}$: $\alpha\geq \lambda_{1}(V)}$)}\label{Th3}
 Let $G=(V, E)$, $h$, ${\bf H}$, $\lambda _{1}(V)$ and $ J_{ \alpha ,\beta }$ be as in Theorem \ref{Th2}, $E_{1}^{\perp}$ be defined as in \eqref{E_kp}. Then we have the following assertions:
 
\begin{itemize}

\item[(i)] When $\alpha > \lambda _{1}(V)$, for any $\beta \in  \mathbb{R}$, we have
$$
\inf_{u\in {\bf H}} J_{ \alpha ,\beta }(u) =- \infty.
$$

\item[(ii)] When $\alpha=\lambda_{1}(V)$, we have the following three subcases:

\begin{itemize} 

\item[(ii-a)] For any $ \beta>0 $, we have
$$
\inf_{u\in {\bf H}} J_{\lambda _{1}(V)  ,\beta }(u) =- \infty;
$$

\item[(ii-b)] $J_{ \lambda _{1}(V) ,0 }$ has a minimizer $u$ in ${\bf H}$ so that the Kazdan-Warner equation \eqref{kw_2} has a solution $u\in{\bf H}$; 

\item[(ii-c)] For any $\beta <0$, $J_{ \lambda _{1}(V) ,\beta}$ has a minimizer $u$ in $E_{1}^{\perp}$ so that the Kazdan-Warner equation 
\begin{equation}
   \Delta u+\alpha u=-\frac{\beta he^{u}}{\displaystyle{\int_{V}he^{u}d \mu }}+\frac{\beta }{{\rm Vol}(V)}+\sum_{i=1}^{n_{1}}\frac{\displaystyle{\beta \int_{V}hu_{1i}e^{u}d\mu}}{\displaystyle{\int_{V}he^{u}d\mu}} u_{1i}
\label{kw_4} 
\end{equation}
\end{itemize}
\end{itemize}
has a solution $u\in E_{1}^{\perp}$.

\end{theorem}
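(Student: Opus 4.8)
The plan is to handle the three regimes by increasingly substantial arguments, reserving the real work for (ii-c). Throughout I fix a normalized first eigenfunction $\phi\in E_{\lambda_1(V)}$, so that $-\Delta\phi=\lambda_1(V)\phi$, $\int_V\phi\,d\mu=0$, $\int_V\phi^2\,d\mu=1$ and hence $\int_V|\nabla\phi|^2\,d\mu=\lambda_1(V)$; since $\phi$ is non-constant with zero mean, $\max_V\phi>0>\min_V\phi$. For the divergence statements (i) and (ii-a) I would test $J_{\alpha,\beta}$ along the ray $u=t\phi$. Here $\tfrac12\int_V(|\nabla(t\phi)|^2-\alpha(t\phi)^2)\,d\mu=\tfrac{t^2}{2}(\lambda_1(V)-\alpha)$, while $\log\int_V he^{t\phi}\,d\mu$ grows only linearly in $|t|$ (it equals $t\max_V\phi+O(1)$ as $t\to+\infty$ and $t\min_V\phi+O(1)$ as $t\to-\infty$, both linear and tending to $+\infty$). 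In case (i), $\alpha>\lambda_1(V)$ makes the quadratic coefficient negative, so $J_{\alpha,\beta}(t\phi)\to-\infty$ as $|t|\to\infty$ for every $\beta$. In case (ii-a), $\alpha=\lambda_1(V)$ kills the quadratic term identically, and $\beta>0$ together with $\log\int_V he^{t\phi}\,d\mu\to+\infty$ forces $J_{\lambda_1(V),\beta}(t\phi)\to-\infty$.

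Part (ii-b) is immediate from the variational definition \eqref{lamda1} of $\lambda_1(V)$: for every $u\in\mathbf{H}$ we have $\int_V|\nabla u|^2\,d\mu\geq\lambda_1(V)\int_V u^2\,d\mu$, so $J_{\lambda_1(V),0}(u)\geq0=J_{\lambda_1(V),0}(0)$ and $u\equiv0$ is a minimizer. Its Euler--Lagrange equation is $-\Delta u=\lambda_1(V)u$, which $u\equiv0$ (indeed any first eigenfunction) solves, so \eqref{kw_2} with $\beta=0$ has a solution.

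The substance of the theorem is (ii-c), where $\alpha=\lambda_1(V)$, $\beta<0$, and we minimize over $E_1^{\perp}$. The first task is coercivity. On $E_1^{\perp}$ the relevant smallest eigenvalue of $-\Delta$ is $\lambda_2(V)$, so $\int_V|\nabla u|^2\,d\mu\geq\lambda_2(V)\int_V u^2\,d\mu$; consequently $\|u\|_{1,\lambda_1(V)}^2:=\int_V(|\nabla u|^2-\lambda_1(V)u^2)\,d\mu\geq(\lambda_2(V)-\lambda_1(V))\int_V u^2\,d\mu$ is positive definite, hence a genuine norm on the finite-dimensional space $E_1^{\perp}$ and equivalent to $\|\cdot\|_{\mathbf{W}^{1,2}(V)}$ there. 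For the logarithmic term I would apply Jensen's inequality to the probability measure $d\mu/\mathrm{Vol}(V)$ and use $\int_V u\,d\mu=0$ to obtain $\int_V e^{u}\,d\mu\geq\mathrm{Vol}(V)$, whence $\int_V he^{u}\,d\mu\geq(\min_V h)\,\mathrm{Vol}(V)>0$ and $\log\int_V he^{u}\,d\mu\geq C_0$ for a constant $C_0$ independent of $u$. Since $\beta<0$, the term $-\beta\log\int_V he^{u}\,d\mu\geq-\beta C_0$ is bounded below, so $J_{\lambda_1(V),\beta}(u)\geq\tfrac12\|u\|_{1,\lambda_1(V)}^2-\beta C_0$ is coercive on $E_1^{\perp}$. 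As $E_1^{\perp}$ is finite dimensional and $J_{\lambda_1(V),\beta}$ is continuous, a minimizing sequence is bounded, a subsequence converges, and the limit is a minimizer $u\in E_1^{\perp}$.

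It remains to derive \eqref{kw_4}. Computing the first variation at $u$ against $\varphi\in E_1^{\perp}$ and using $\int_V\Gamma(u,\varphi)\,d\mu=-\int_V\varphi\,\Delta u\,d\mu$, I get $\int_V\big(-\Delta u-\lambda_1(V)u-\beta he^{u}/\!\int_V he^{u}\,d\mu\big)\varphi\,d\mu=0$ for all $\varphi\in E_1^{\perp}$. Thus $R:=-\Delta u-\lambda_1(V)u-\beta he^{u}/\!\int_V he^{u}\,d\mu$ is $L^2(\mu)$-orthogonal to $E_1^{\perp}$, hence lies in its complement $\mathrm{span}\{1\}\oplus E_{\lambda_1(V)}$; write $R=c+\sum_{i=1}^{n_1}a_iu_{1i}$. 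Pairing with the constant $1$ (and using $\int_V\Delta u\,d\mu=0$, $\int_V u\,d\mu=0$, $\int_V u_{1i}\,d\mu=0$) gives $c=-\beta/\mathrm{Vol}(V)$; pairing with $u_{1j}$ (and using $-\Delta u_{1j}=\lambda_1(V)u_{1j}$, orthonormality, and $u\perp E_{\lambda_1(V)}$) gives $a_j=-\beta\int_V hu_{1j}e^{u}\,d\mu/\!\int_V he^{u}\,d\mu$. Substituting and rearranging produces exactly \eqref{kw_4}. The main obstacle is precisely this degeneracy at $\alpha=\lambda_1(V)$: the quadratic form loses definiteness on $\mathbf{H}$, so one must excise the kernel $E_{\lambda_1(V)}$ and minimize on $E_1^{\perp}$ to restore coercivity, and the price is that the constrained variation only yields orthogonality to $E_1^{\perp}$ rather than to all of $\mathbf{H}$, thereby injecting the extra projection terms $\sum_i\beta\big(\int_V hu_{1i}e^{u}\,d\mu/\!\int_V he^{u}\,d\mu\big)u_{1i}$ into \eqref{kw_4} that I must compute correctly.
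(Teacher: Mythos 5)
Your proof is correct and follows essentially the same route as the paper: eigenfunction rays $t\phi$ for the divergence statements (i) and (ii-a), the spectral gap $\lambda_2(V)-\lambda_1(V)$ for coercivity on $E_1^{\perp}$, and the identification $(E_1^{\perp})^{\perp}=\{\mathrm{const}\}\oplus E_{\lambda_1(V)}$ to produce the projection terms in \eqref{kw_4}. The only deviation is that you lower-bound $\int_V he^{u}\,d\mu$ by the $u$-independent constant $(\min_V h)\,\mathrm{Vol}(V)$ via Jensen's inequality, which is slightly cleaner than the paper's Lemma 3.4 bound $C_1\exp\left(C_2\|\nabla u\|_{L^2(V)}\right)$; both give the needed coercivity.
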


\begin{remark}
   In fact, the corresponding Kazdan-Warner equation of $J_{\lambda _{1}(V), 0}$ is $-\Delta u=\lambda_{1}(V)u$ which has a solution obviously. 
\end{remark}

Let $\lambda_{k+1}(V)$ be the $(k+1)$-th non-zero eigenvalue of $-\Delta$, say 
\begin{equation}
    \lambda _{k+1}(V)= \inf_{u\in E_{k}^{\perp }, \int_{V}u^{2}d\mu=1} \int_{V}|\nabla u|^{2}d \mu .
\label{lambdak+1}
\end{equation}
We consider higher eigenvalue case below.

\begin{theorem}{\bf ($\boldsymbol{J_{\alpha, \beta}$: $\alpha< \lambda_{k+1}(V)}$)}
Let $G=(V, E)$, $h$, ${\bf H}$ and $ J_{ \alpha ,\beta }$ be as in Theorem \ref{Th3}, $\lambda _{k}(V)$ be the k-th non-zero eigenvalue of $-\Delta$, $E_{\lambda _{k}(V)}$ be the eigenfuction space with respect to $\lambda _{k}(V)$, $\{u_{ki}\}_{i=1}^{n_{k}}$ be an orthonormal basis of $ E_{\lambda _{k}(V)}$, $E_{k}$ and $ E_{k}^{\perp }$ be defined as in \eqref{E_kp}. Then for any $\alpha <  \lambda _{k+1}(V), \beta \in  \mathbb{R} , J_{ \alpha ,\beta }$ has a minimizer $u$ in $E_{k}^{\perp }$.  Moreover, the Kazdan-Warner equation 
\begin{equation}
     \Delta u+\alpha u=-\frac{\beta he^{u}}{\displaystyle{\int_{V}he^{u}d \mu }}+\frac{\beta }{{\rm Vol}(V)}+\sum_{s=1}^{k}\sum_{i=1}^{n_{s}}\frac{\displaystyle{\beta \int_{V}hu_{si}e^{u}d\mu}}{\displaystyle{\int_{V}he^{u}d\mu}} u_{si}
\label{kw_3}
\end{equation}
has a  solution $u\in  E_{k}^{\perp }$.
\label{Th4}
\end{theorem}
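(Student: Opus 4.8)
The plan is to run the same scheme as in the proof of Theorem \ref{Th2}, but with $\mathbf{H}$ replaced by the subspace $E_k^{\perp}$ and $\lambda_1(V)$ replaced by $\lambda_{k+1}(V)$. The starting observation is that, by the variational characterization \eqref{lambdak+1}, every $u\in E_k^{\perp}$ satisfies $\int_V|\nabla u|^2 d\mu\geq\lambda_{k+1}(V)\int_V u^2 d\mu$; hence whenever $\alpha<\lambda_{k+1}(V)$ the quadratic form $\int_V(|\nabla u|^2-\alpha u^2)d\mu$ is positive definite on $E_k^{\perp}$, so that $\|u\|_{1,\alpha}=(\int_V(|\nabla u|^2-\alpha u^2)d\mu)^{1/2}$ defines a norm there. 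Since $E_k^{\perp}$ is finite dimensional, this norm is equivalent to $\|\cdot\|_{\mathbf{W}^{1,2}(V)}$, and in particular $\max_V|u|$ and $|\int_V hu\,d\mu|$ are both controlled by $C\|u\|_{1,\alpha}$. Theorem \ref{Th2} is essentially the case $k=0$ (with $E_0^{\perp}=\mathbf{H}$), and I expect the argument to be structurally identical.

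First I would establish coercivity of $J_{\alpha,\beta}$ on $E_k^{\perp}$, for which the crucial ingredient is a two-sided control of the logarithmic term. On one side, $\int_V he^u d\mu\leq(\max_V h)\,\mathrm{Vol}(V)\,e^{\max_V u}$ gives $\log\int_V he^u d\mu\leq C+\max_V u$; on the other, Jensen's inequality applied to the probability measure $h\,d\mu/\int_V h\,d\mu$ yields $\log\int_V he^u d\mu\geq\log\int_V h\,d\mu+(\int_V hu\,d\mu)/(\int_V h\,d\mu)$. For $\beta>0$ the upper bound gives $J_{\alpha,\beta}(u)\geq\tfrac12\|u\|_{1,\alpha}^2-C\beta\|u\|_{1,\alpha}-C\beta$, and symmetrically for $\beta<0$ one uses the lower bound together with $|\int_V hu\,d\mu|\leq C\|u\|_{1,\alpha}$; the case $\beta=0$ is immediate since $J_{\alpha,0}=\tfrac12\|u\|_{1,\alpha}^2$. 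In every case $J_{\alpha,\beta}(u)\to+\infty$ as $\|u\|_{1,\alpha}\to\infty$. With coercivity in hand, existence is routine: a minimizing sequence is bounded in the finite-dimensional space $E_k^{\perp}$, hence by Bolzano--Weierstrass has a subsequence converging to some $u_0\in E_k^{\perp}$, and continuity of $J_{\alpha,\beta}$ forces $u_0$ to attain the infimum.

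Finally I would derive \eqref{kw_3} as the constrained Euler--Lagrange equation. Because the minimization is restricted to $E_k^{\perp}$, stationarity only yields $\frac{d}{dt}\big|_{t=0}J_{\alpha,\beta}(u_0+t\phi)=0$ for every $\phi\in E_k^{\perp}$; using integration by parts $\int_V\Gamma(u_0,\phi)d\mu=-\int_V(\Delta u_0)\phi\,d\mu$, this reads $\int_V g\phi\,d\mu=0$ for all $\phi\in E_k^{\perp}$, where $g=-(\Delta u_0+\alpha u_0)-\beta he^{u_0}/\int_V he^{u_0}d\mu$. Thus $g$ is $L^2(\mu)$-orthogonal to $E_k^{\perp}$, i.e.\ it lies in the span of the constants and of $\{u_{si}\}_{s\leq k}$. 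Testing against the constant function, and using $\int_V\Delta u_0\,d\mu=0$ and $\int_V u_0\,d\mu=0$, fixes the constant coefficient as $-\beta/\mathrm{Vol}(V)$; testing against $u_{si}$, and invoking self-adjointness $\int_V(\Delta u_0)u_{si}\,d\mu=-\lambda_s\int_V u_0 u_{si}\,d\mu=0$ together with $\int_V u_0 u_{si}\,d\mu=0$, fixes the remaining coefficients as $-\beta(\int_V hu_{si}e^{u_0}d\mu)/(\int_V he^{u_0}d\mu)$. Substituting back and rearranging produces exactly \eqref{kw_3}.

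The main obstacle is the coercivity estimate for the logarithmic term (the two-sided bound above): this is precisely what replaces the Trudinger--Moser threshold phenomenon of the compact-surface case, and here finite-dimensionality makes it hold for all $\beta\in\mathbb{R}$ with no critical value. Once coercivity is secured, the determination of the projection coefficients in the Euler--Lagrange equation is a matter of careful bookkeeping with the self-adjointness of $\Delta$ and the orthogonality of the eigenfunctions.
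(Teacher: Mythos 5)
Your proposal is correct and reaches the same conclusion by the same overall skeleton (coercivity of $J_{\alpha,\beta}$ on $E_k^{\perp}$, direct method, constrained Euler--Lagrange with projection onto $(E_k^{\perp})^{\perp}=\{\mathrm{const}\}\oplus E_k$), but the coercivity step is argued by a genuinely different and more elementary route. The paper controls $\log\int_V he^u d\mu$ from above via the Trudinger--Moser-type Lemma \ref{Le3}: it writes $e^u\leq\exp\bigl(\varepsilon\|u\|_{k+1,\alpha}^2+u^2/(4\varepsilon\|u\|_{k+1,\alpha}^2)\bigr)$, invokes the uniform bound on $\int_V e^{\theta v^2}d\mu$ for normalized $v$, and then tunes $\varepsilon=1/(4\beta)$ so that the resulting quadratic term $\beta\varepsilon\|u\|_{k+1,\alpha}^2$ is absorbed by $\tfrac12\|u\|_{k+1,\alpha}^2$; for $\beta<0$ it uses the exponential lower bound of Lemma 3.4. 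You instead exploit finite-dimensionality directly: $\max_V|u|\leq\mu_{\min}^{-1/2}\|u\|_{L^2}\leq C\|u\|_{1,\alpha}$ gives a bound on $\log\int_V he^u d\mu$ that is \emph{linear} in the norm (and Jensen gives the matching linear lower bound), so $J_{\alpha,\beta}(u)\geq\tfrac12\|u\|_{1,\alpha}^2-C|\beta|\,\|u\|_{1,\alpha}-C$ for every $\beta$ with no $\varepsilon$-tuning at all. Your version is simpler and makes transparent why no critical threshold in $\beta$ appears on a finite graph; the paper's version is structured to parallel the compact-surface argument where only the quadratic (Moser--Trudinger) bound is available. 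Your treatment of the Euler--Lagrange equation matches the paper's: the identification of $(E_k^{\perp})^{\perp}$ with $\{\mathrm{const}\}\oplus E_k$ is the paper's Lemma 5.2 (you assert it without proof, but it is a one-line orthogonal decomposition), and your computation of the coefficients via $\int_V(\Delta u_0)u_{si}\,d\mu=-\lambda_s\int_V u_0u_{si}\,d\mu=0$ reproduces \eqref{kw_3} with the correct signs.
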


\begin{theorem}{\bf ($\boldsymbol{J_{\alpha, \beta}$: $\alpha\geq \lambda_{k+1}(V)}$)}\label{Th5}
Let $G=(V, E)$, $h$, ${\bf H}$, $J_{ \alpha ,\beta }, \{u_{ki}\}_{i=1}^{n_{k}}$, $\lambda _{k}(V)$, $E_{k},  E_{k}^{\perp }$ be as in Theorem \ref{Th4}. Then we have the following assertions:

\begin{itemize} 

\item[(i)] When $\alpha >\lambda _{k+1}(V)$, for any $\beta \in  \mathbb{R}$, we have
$$
\inf_{u\in E_{k}^{\perp }} J_{ \alpha ,\beta }(u) =- \infty.
$$

\item[(ii)] When $\alpha=\lambda_{k+1}(V)$, we have the following three subcases:

\begin{itemize} 

\item[(ii-a)] For any $\beta>0$, we have
$$
\inf_{u\in E_{k}^{\perp }} J_{ \lambda _{k+1}(V) ,\beta }(u) =- \infty; 
$$

\item[(ii-b)] $J_{ \lambda _{k+1}(V) ,0 }$ has a minimizer $u$ in ${ E_{k}^{\perp }}$ so that the Kazdan-Warner equation \eqref{kw_3} has a solution $u\in E_{k}^{\perp } $;

\item[(ii-c)] For any $\beta <0$, $J_{ \lambda _{k+1}(V) ,\beta}$ has a minimizer $u$ in $E_{k+1}^{\perp}$ so that the Kazdan-Warner equation 
\begin{equation}
   \Delta u+\alpha u=-\frac{\beta he^{u}}{\displaystyle{\int_{V}he^{u}d \mu}}+\frac{\beta }{{\rm Vol}(V)}+\sum_{i=1}^{k+1}\sum_{i=1}^{n_{s}}\frac{\displaystyle{\beta \int_{V}hu_{si}e^{u}d\mu}}{\displaystyle{\int_{V}he^{u}d\mu}} u_{si}
\label{kw_5} 
\end{equation}
has a solution $u\in E_{k+1}^{\perp}$. 

\end{itemize}
\end{itemize}

\end{theorem}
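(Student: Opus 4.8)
The plan is to reproduce, at the level of the $k$-th spectral gap, the dichotomy already established for the first eigenvalue in Theorem \ref{Th3}, working throughout in the subspace $E_k^\perp$ and its refinement $E_{k+1}^\perp$. The two divergence statements (i) and (ii-a) are obtained by a one-parameter test-function argument. Fix a nonzero $\phi\in E_{\lambda_{k+1}(V)}$ normalized by $\int_V\phi^2\,d\mu=1$; since $E_{\lambda_{k+1}(V)}\subset E_k^\perp$, the curve $t\mapsto t\phi$ lies in $E_k^\perp$, and $\int_V|\nabla(t\phi)|^2\,d\mu=\lambda_{k+1}(V)t^2$. Hence
\[
J_{\alpha,\beta}(t\phi)=\tfrac{t^2}{2}\bigl(\lambda_{k+1}(V)-\alpha\bigr)-\beta\log\!\int_V he^{t\phi}\,d\mu .
\]
Because $\phi$ is mean-zero and $V$ is finite, $\log\int_V he^{t\phi}\,d\mu=t\max_V\phi+O(1)$ as $t\to+\infty$, so the log term is at most linear in $t$. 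When $\alpha>\lambda_{k+1}(V)$ the quadratic coefficient is strictly negative and dominates, giving $J_{\alpha,\beta}(t\phi)\to-\infty$; this proves (i) for every $\beta$. When $\alpha=\lambda_{k+1}(V)$ the quadratic term vanishes identically, leaving $J_{\lambda_{k+1}(V),\beta}(t\phi)=-\beta\log\int_V he^{t\phi}\,d\mu\sim-\beta t\max_V\phi$, which tends to $-\infty$ as $t\to+\infty$ whenever $\beta>0$ (note $\max_V\phi>0$ since $\phi$ is mean-zero and nonzero); this is (ii-a).

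For (ii-b) I would simply invoke the variational characterization \eqref{lambdak+1}: for $u\in E_k^\perp$ one has $\int_V|\nabla u|^2\,d\mu\ge\lambda_{k+1}(V)\int_V u^2\,d\mu$, so $J_{\lambda_{k+1}(V),0}(u)=\tfrac12\int_V(|\nabla u|^2-\lambda_{k+1}(V)u^2)\,d\mu\ge0$ with equality at $u=0$. Thus the minimum is attained (the zero function, or any $\lambda_{k+1}(V)$-eigenfunction, realizes it), and the associated equation is $\Delta u+\lambda_{k+1}(V)u=0$, i.e. \eqref{kw_3} with $\beta=0$, which is solved trivially.

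The substantive case is (ii-c), $\beta<0$. The key observation is that although the quadratic form $\int_V(|\nabla u|^2-\lambda_{k+1}(V)u^2)\,d\mu$ is only positive semidefinite on $E_k^\perp$ (it degenerates along $E_{\lambda_{k+1}(V)}$), it becomes a genuine norm once we pass to $E_{k+1}^\perp$: by \eqref{lambdak+1} applied with index $k+1$, every $u\in E_{k+1}^\perp$ satisfies $\int_V|\nabla u|^2\,d\mu\ge\lambda_{k+2}(V)\int_V u^2\,d\mu$, and since $\lambda_{k+1}(V)<\lambda_{k+2}(V)$ we get $\int_V(|\nabla u|^2-\lambda_{k+1}(V)u^2)\,d\mu\ge(1-\lambda_{k+1}(V)/\lambda_{k+2}(V))\int_V|\nabla u|^2\,d\mu\ge c\|u\|_{{\bf W}^{1,2}(V)}^2$. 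Thus $\alpha=\lambda_{k+1}(V)<\lambda_{k+2}(V)$ places us, on the space $E_{k+1}^\perp$, exactly in the regime of Theorem \ref{Th4} (with $k$ replaced by $k+1$), which furnishes a minimizer $u\in E_{k+1}^\perp$. Concretely, existence follows because the log term is controlled linearly, $|\log\int_V he^u\,d\mu|\le C(1+\|u\|)$ on the finite graph, while the quadratic part is coercive, so $J_{\lambda_{k+1}(V),\beta}$ is coercive and continuous on the finite-dimensional space $E_{k+1}^\perp$ and attains its infimum.

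It remains to extract the equation \eqref{kw_5}. I would compute the first variation of $J_{\lambda_{k+1}(V),\beta}$ along directions $\psi\in E_{k+1}^\perp$ and use $\int_V\nabla u\cdot\nabla\psi\,d\mu=-\int_V(\Delta u)\psi\,d\mu$ to conclude that $-\Delta u-\lambda_{k+1}(V)u-\beta he^u/\int_V he^u\,d\mu$ is $L^2(\mu)$-orthogonal to $E_{k+1}^\perp$, hence lies in the span of the constants and the eigenfunctions $u_{si}$, $1\le s\le k+1$. Testing this relation against the constant function (integrating over $V$) evaluates the constant multiplier as $-\beta/\mathrm{Vol}(V)$, and testing against each $u_{si}$, together with $\int_V uu_{si}\,d\mu=0$ and $-\int_V(\Delta u)u_{si}\,d\mu=\lambda_s\int_V uu_{si}\,d\mu=0$, evaluates the remaining multipliers as $\beta\int_V hu_{si}e^u\,d\mu/\int_V he^u\,d\mu$; rearranging yields \eqref{kw_5}. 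The main obstacle is precisely this multiplier bookkeeping: one must track all $n_1+\cdots+n_{k+1}$ orthogonality constraints and verify that the eigenfunction test terms coming from the Dirichlet and the $\alpha u^2$ pieces cancel, leaving only the contributions of the nonlinearity. Everything else (coercivity, compactness) is softened by the finite-dimensionality of ${\bf H}$.
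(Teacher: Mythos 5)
Your proposal is correct and follows essentially the same route as the paper: the rays $t\phi$ along a $\lambda_{k+1}(V)$-eigenfunction (with the log term growing at most linearly in $t$) give (i) and (ii-a), the nonnegativity of the quadratic form on $E_{k}^{\perp}$ gives (ii-b), and the spectral-gap coercivity $\int_{V}(|\nabla u|^{2}-\lambda_{k+1}(V)u^{2})\,d\mu\geq\bigl(1-\lambda_{k+1}(V)/\lambda_{k+2}(V)\bigr)\|\nabla u\|_{L^{2}(V)}^{2}$ on $E_{k+1}^{\perp}$ plus the Lagrange-multiplier bookkeeping gives (ii-c), exactly as in Section 7. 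Your remark that (ii-c) is just Theorem \ref{Th4} with $k$ replaced by $k+1$ is a tidy shortcut the paper does not exploit (it reruns the argument), and the only blemish is a harmless sign slip in the stated value of the eigenfunction multipliers, which your final rearrangement to \eqref{kw_5} implicitly corrects.
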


\begin{remark}
   In fact, the corresponding Kazdan-Warner equation of $J_{\lambda _{k+1}(V)  ,0 }$ is $-\Delta u=\lambda_{k+1}(V)u$ which has a solution obviously.
\end{remark}

\section{Proof of Theorem 2.1}

Grigor'yan, Lin and Yang have studied the Kazdan-Warner equation on a finite graph. And they have given some significant lemmas in \cite{graph} which we will review below.

\begin{lemma}{\rm (\cite{graph})}
    Let $G=(V, E)$ be a finite graph. Then the Sobolev space  ${\bf W}^{1,2}(V)$ is pre-compact. Namely, if $\{u_{j}\}_{j}$ is bounded in  ${\bf W}^{1,2}(V)$, then there exists some $ u \in {\bf W}^{1,2}(V)$ such that up to a subsequence, $u_{j}\to u$ in ${\bf W}^{1,2}(V)$.
\label{Le1}
\end{lemma}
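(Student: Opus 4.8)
The plan is to reduce everything to the elementary fact that a bounded sequence in a finite-dimensional normed space has a convergent subsequence. The key structural observation is that, since $V$ is a finite set with $N:=\#V$ vertices, every function $u\in{\bf F}(V)$ has $\int_V(|\nabla u|^2+u^2)\,d\mu<+\infty$ automatically; hence ${\bf W}^{1,2}(V)={\bf F}(V)$, and the evaluation map $u\mapsto(u(x))_{x\in V}$ identifies ${\bf W}^{1,2}(V)$ with $\mathbb{R}^{N}$ as a vector space. Under this identification $\|\cdot\|_{{\bf W}^{1,2}(V)}$ is a genuine norm on a finite-dimensional space.

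First I would show that boundedness in the Sobolev norm forces pointwise boundedness at each vertex. Indeed, for any fixed $x\in V$ one has, from \eqref{int},
$$
\mu(x)\,u_{j}(x)^{2}\le\int_{V}u_{j}^{2}\,d\mu\le\|u_{j}\|_{{\bf W}^{1,2}(V)}^{2}\le C^{2},
$$
where $C$ is a bound for the sequence; since $\mu(x)>0$ this yields $|u_{j}(x)|\le C/\sqrt{\mu(x)}$ for all $j$. Thus the real sequences $(u_{j}(x))_{j}$ are bounded for each of the finitely many $x\in V$.

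Next I would extract the limit by the Bolzano--Weierstrass theorem applied $N$ times (equivalently, by a single application in $\mathbb{R}^{N}$): passing to a subsequence, $u_{j}(x)\to u(x)$ for every $x\in V$, and this defines a function $u\in{\bf F}(V)={\bf W}^{1,2}(V)$. Finally I would verify that pointwise convergence at every vertex implies convergence in the Sobolev norm. This is immediate because $\|u_{j}-u\|_{{\bf W}^{1,2}(V)}^{2}$ is, after expanding $\Gamma$ and the integral \eqref{int}, a finite sum of squares of the quantities $u_{j}(x)-u(x)$ and $(u_{j}(y)-u(y))-(u_{j}(x)-u(x))$, each of which tends to $0$; hence the whole sum tends to $0$.

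There is no serious obstacle here: the statement is essentially that all norms on a finite-dimensional space are equivalent and that such spaces are locally compact. The only point requiring a word of care is the very first reduction---verifying that the abstractly defined Sobolev space really coincides with $\mathbb{R}^{N}$ and that $\|\cdot\|_{{\bf W}^{1,2}(V)}$ controls each vertex value through $\mu(x)>0$---after which everything follows from Heine--Borel.
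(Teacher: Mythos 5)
Your proof is correct, and it is essentially the argument the paper relies on: the lemma is quoted from \cite{graph}, where pre-compactness is likewise deduced from the fact that ${\bf W}^{1,2}(V)$ on a finite graph is the finite-dimensional space of all functions on $V$, on which boundedness of the norm gives pointwise boundedness (via $\mu(x)>0$) and Bolzano--Weierstrass plus finiteness of $V$ upgrades pointwise subsequential convergence to norm convergence. No gaps.
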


\begin{lemma}{\rm (\cite{graph})}
   Let $G=(V, E)$ be a finite graph. For all functions $u\in {\bf H}$, there exists some constant $C_{P}$ depending only on $G$ such that
\begin{equation*}
    \int_{V}u^{2}d\mu \leq C_{{}}\int_{V}|\nabla u|^{2}d\mu.
\end{equation*}
\label{Le2}
\end{lemma}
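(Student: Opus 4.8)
The final statement is the Poincaré (spectral gap) inequality for mean-zero functions, and the cleanest route is to identify the optimal constant with $1/\lambda_1(V)$ and to prove that this eigenvalue is strictly positive. So I would take $\lambda_1(V)$ to be the infimum in \eqref{lamda1}, namely $\lambda_1(V)=\inf\{\int_V|\nabla u|^2\,d\mu : u\in\mathbf{H},\ \int_V u^2\,d\mu=1\}$, and reduce the lemma to the single claim $\lambda_1(V)>0$. Granting this, any $u\in\mathbf{H}$ with $u\not\equiv 0$ may be rescaled to $u/(\int_V u^2\,d\mu)^{1/2}\in\mathbf{H}$, and the definition of the infimum gives $\int_V|\nabla u|^2\,d\mu\ge\lambda_1(V)\int_V u^2\,d\mu$; thus the lemma holds with $C_P=1/\lambda_1(V)$, which depends only on $G$, the weights and the measure, while the case $u\equiv 0$ is trivial.

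To see that the infimum is attained, I would use that $\mathbf{H}$ is finite dimensional, so the constraint set $S=\{u\in\mathbf{H}:\int_V u^2\,d\mu=1\}$ is compact; equivalently, a minimizing sequence is bounded in $\mathbf{W}^{1,2}(V)$ (its $L^2$ norm is fixed and its Dirichlet energy stays bounded), so Lemma \ref{Le1} extracts a convergent subsequence whose limit again lies in the closed set $S$. Since $u\mapsto\int_V|\nabla u|^2\,d\mu$ is continuous, it attains its minimum $\lambda_1(V)$ at some $u_\ast\in S$, and in particular $\lambda_1(V)\ge 0$.

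The crux is to exclude $\lambda_1(V)=0$, and this is precisely where connectivity of $G$ enters. If $\lambda_1(V)=0$, then $\int_V|\nabla u_\ast|^2\,d\mu=\sum_{x\in V}\mu(x)\,\Gamma(u_\ast)(x)=0$; as every summand is nonnegative (recall $\mu(x)>0$ and $w_{xy}>0$), each $\Gamma(u_\ast)(x)$ vanishes, which by the definition of $\Gamma$ forces $u_\ast(y)=u_\ast(x)$ whenever $y\sim x$. Connectivity then propagates this equality across all of $V$, so $u_\ast$ is constant; the constraint $\int_V u_\ast\,d\mu=0$ makes that constant zero, contradicting $\int_V u_\ast^2\,d\mu=1$. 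Hence $\lambda_1(V)>0$, which finishes the argument.

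I expect the positivity step to be the only genuine obstacle: both connectivity (a disconnected graph admits a nonzero, mean-zero, locally constant function with vanishing gradient) and attainment of the minimum (ruling out an infimum equal to $0$ that is never realized) are indispensable, and on a finite graph both are supplied for free by finite dimensionality, so no delicate analysis is required. For a reader preferring an explicit constant I would instead write the variance identity $\int_V u^2\,d\mu=\frac{1}{2\,\mathrm{Vol}(V)}\sum_{x,y}\mu(x)\mu(y)\bigl(u(x)-u(y)\bigr)^2$, valid since $\int_V u\,d\mu=0$, and bound each difference $u(x)-u(y)$ by summing edge increments along a path joining $x$ to $y$ and applying Cauchy-Schwarz; this produces a combinatorial $C_P$ in terms of path lengths and weights, but the compactness route above is shorter and entirely sufficient.
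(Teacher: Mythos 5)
Your proof is correct and is essentially the standard argument: the paper itself does not reprove this lemma but cites \cite{graph}, where the proof is exactly the compactness-plus-connectivity contradiction you give (a minimizing/violating sequence on the constraint set converges by finite dimensionality, the limit has vanishing Dirichlet energy, hence is constant by connectivity, hence zero by the mean-zero condition, contradicting the $L^2$ normalization). Identifying the optimal constant as $1/\lambda_1(V)$ is a harmless refinement of the same idea, so nothing further is needed.
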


\begin{lemma}{\rm (\cite{graph})}
     Let $G=(V, E)$ be a finite graph. For any $\theta  \in \mathbb{R} $, there exists a constant $C$ depending only on $\theta $ and $G$ such that for all functions $v\in{\bf H}$ with $\displaystyle{\int_{V}|\nabla v|^{2}d\mu \leq 1}$, there holds
\begin{equation*}
    \int_{V}e^{\theta  v^{2}}d\mu \leq C.
\end{equation*}
\label{Le3}
\end{lemma}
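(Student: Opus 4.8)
The plan is to exploit the fact that $G$ is a \emph{finite} graph, which collapses the usual Trudinger--Moser machinery into an elementary pointwise estimate. First I would invoke the Poincaré-type inequality of Lemma \ref{Le2}: for any $v\in{\bf H}$ satisfying $\int_{V}|\nabla v|^{2}d\mu\le 1$, one has
\begin{equation*}
    \int_{V}v^{2}d\mu\le C_{P}\int_{V}|\nabla v|^{2}d\mu\le C_{P},
\end{equation*}
where $C_{P}$ depends only on $G$. This is the only nontrivial analytic input, and it is already available.

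The second step is to convert this $L^{2}$ bound into a uniform pointwise bound, which is where finiteness does the real work. Since the integral \eqref{int} is a finite weighted sum, $\int_{V}v^{2}d\mu=\sum_{x\in V}\mu(x)v(x)^{2}$, and every weight $\mu(x)$ is strictly positive, I set $\mu_{\min}=\min_{x\in V}\mu(x)>0$. Then for each vertex $x$,
\begin{equation*}
    v(x)^{2}\le\frac{1}{\mu_{\min}}\sum_{y\in V}\mu(y)v(y)^{2}\le\frac{C_{P}}{\mu_{\min}}=:M,
\end{equation*}
a constant depending only on $G$. Thus $v(x)^{2}\le M$ uniformly over all admissible $v$ and all $x\in V$.

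Finally I would estimate the exponential integral directly. For every $x\in V$ we have $\theta v(x)^{2}\le|\theta|M$, so
\begin{equation*}
    \int_{V}e^{\theta v^{2}}d\mu=\sum_{x\in V}\mu(x)e^{\theta v(x)^{2}}\le e^{|\theta|M}\sum_{x\in V}\mu(x)=e^{|\theta|M}\,\text{Vol}(V).
\end{equation*}
Taking $C=e^{|\theta|M}\,\text{Vol}(V)$, which depends only on $\theta$ and $G$, completes the argument.

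The point to emphasize --- rather than a genuine obstacle --- is that on a finite graph there is no concentration phenomenon, so the entire difficulty of the classical Trudinger--Moser inequality on surfaces evaporates. The $L^{2}$ control furnished by Poincaré automatically upgrades to an $L^{\infty}$ bound precisely because $V$ is finite and each $\mu(x)$ is bounded below, after which exponential integrability is immediate. No borderline analysis, sharp constant, or blow-up argument is required; the only subtlety is to record the dependence of $C$ on $\theta$ and $G$ through $C_{P}$, $\mu_{\min}$, and $\text{Vol}(V)$.
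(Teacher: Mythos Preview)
Your argument is correct. Note, however, that the paper does not supply its own proof of this lemma; it is simply quoted from \cite{graph} (Grigor'yan--Lin--Yang). The approach you give is precisely the standard one used there: the Poincar\'e inequality of Lemma~\ref{Le2} controls $\|v\|_{L^{2}(V)}$, the elementary comparison $\|v\|_{L^{\infty}(V)}\le \mu_{\min}^{-1/2}\|v\|_{L^{2}(V)}$ on a finite weighted vertex set (which the present paper also records and uses, as \eqref{infty-2}) upgrades this to a uniform pointwise bound on $v^{2}$, and the exponential integral is then a finite sum of uniformly bounded terms. There is nothing to add; your remark that the finiteness of $V$ eliminates any concentration phenomenon is exactly the point.
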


Since the case of $\beta \leq 0$ is considered, we need to estimate the lower bound of $\displaystyle{\int_{V}he^{u}d \mu}$.

\begin{lemma}
    Let $G=(V, E)$ be a finite graph, h be a positive function on $V$. There exist constants $C_{1}$ and $C_{2}$ depending only on $G$ and $h$ such that for any $u\in H$ , there holds
\begin{equation}
    \int_{V}he^{u}d \mu \geq C_{1} \exp{\left(C_{2}\left \| \nabla u\right \|_{L^{2}(V)} \right)} .
\label{intheu}
\end{equation}
\end{lemma}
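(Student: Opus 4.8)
The plan is to reduce the integral lower bound to a pointwise lower bound on $\max_{V}u$. Since $h>0$ and $\mu>0$ on the finite set $V$, write $\mu_{\min}=\min_{x\in V}\mu(x)>0$ and $h_{\min}=\min_{x\in V}h(x)>0$, and let $x_{0}\in V$ be a vertex at which $u$ attains its maximum. Then
\begin{equation*}
\int_{V}he^{u}\,d\mu=\sum_{x\in V}\mu(x)h(x)e^{u(x)}\geq \mu(x_{0})h(x_{0})e^{u(x_{0})}\geq \mu_{\min}h_{\min}\,e^{\max_{V}u}.
\end{equation*}
So it suffices to prove $\max_{V}u\geq C_{2}\,\|\nabla u\|_{L^{2}(V)}$ for a constant $C_{2}>0$ depending only on $G$ and $h$; then \eqref{intheu} follows with $C_{1}=\mu_{\min}h_{\min}$.

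First I would show that the Dirichlet energy controls the oscillation of $u$. Unwinding the definitions and summing over vertices, each edge being counted twice so that the factor $\tfrac{1}{2}$ cancels, gives
\begin{equation*}
\int_{V}|\nabla u|^{2}\,d\mu=\sum_{xy\in E}w_{xy}\bigl(u(y)-u(x)\bigr)^{2}.
\end{equation*}
Bounding every difference by the oscillation, $\bigl(u(y)-u(x)\bigr)^{2}\leq(\max_{V}u-\min_{V}u)^{2}$, and setting $W=\sum_{xy\in E}w_{xy}$, I obtain
\begin{equation*}
\|\nabla u\|_{L^{2}(V)}^{2}\leq W\,(\max_{V}u-\min_{V}u)^{2},\qquad\text{hence}\qquad \max_{V}u-\min_{V}u\geq \tfrac{1}{\sqrt{W}}\,\|\nabla u\|_{L^{2}(V)}.
\end{equation*}

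Next I would use the mean-zero constraint $\int_{V}u\,d\mu=0$ to transfer this control from the oscillation to $\max_{V}u$ alone. Splitting $\sum_{x}\mu(x)u(x)=0$ into its positive and negative parts yields $\sum_{u(x)<0}\mu(x)\bigl(-u(x)\bigr)=\sum_{u(x)\geq 0}\mu(x)u(x)\leq \mathrm{Vol}(V)\,\max_{V}u$. Keeping only the minimizing vertex on the left-hand side gives $\mu_{\min}\bigl(-\min_{V}u\bigr)\leq \mathrm{Vol}(V)\max_{V}u$, that is, $-\min_{V}u\leq(\mathrm{Vol}(V)/\mu_{\min})\max_{V}u$. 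Therefore $\max_{V}u-\min_{V}u\leq\bigl(1+\mathrm{Vol}(V)/\mu_{\min}\bigr)\max_{V}u$, and combining with the previous step yields $\max_{V}u\geq C_{2}\,\|\nabla u\|_{L^{2}(V)}$ with $C_{2}=\bigl[\sqrt{W}\,(1+\mathrm{Vol}(V)/\mu_{\min})\bigr]^{-1}$, which completes the argument.

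The one genuinely substantive point, and the step I expect to be the main obstacle, is this last transfer. It is immediate that a large Dirichlet energy forces a large oscillation $\max_{V}u-\min_{V}u$, but the exponential lower bound really needs the \emph{maximum itself} to be large, whereas a priori a function could have small maximum and very negative minimum. What rules this out is precisely the combination of the mean-zero normalization with the positivity of the measure (so that $\mu_{\min}>0$), which prevents $u$ from being strongly negative at a single vertex without a compensating positive mass elsewhere. Finally I would double-check the dependence of $C_{1}$ and $C_{2}$ to confirm they involve only $\mu_{\min}$, $W$, $\mathrm{Vol}(V)$ and $h_{\min}$, hence only $G$ and $h$ as claimed.
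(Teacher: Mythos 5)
Your proof is correct, but it takes a genuinely different (and in fact sharper) route than the paper. The paper bounds $e^{u(x)}\geq e^{-\|u\|_{L^{\infty}(V)}}$ pointwise, so that $\int_{V}he^{u}\,d\mu\geq h_{\min}\mathrm{Vol}(V)\exp\left(-\|u\|_{L^{\infty}(V)}\right)$, and then chains the elementary embedding $\|u\|_{L^{\infty}(V)}\leq\mu_{\min}^{-1/2}\|u\|_{L^{2}(V)}$ with the Poincar\'e inequality of Lemma 3.2; this yields \eqref{intheu} with a \emph{negative} constant $C_{2}=-C_{P}^{1/2}\mu_{\min}^{-1/2}$, i.e.\ a lower bound that decays as $\|\nabla u\|_{L^{2}(V)}$ grows. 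You instead keep only the maximizing vertex and prove $\max_{V}u\geq C_{2}\|\nabla u\|_{L^{2}(V)}$ with $C_{2}>0$, using the edge-sum identity for the Dirichlet energy to control the oscillation and then the mean-zero constraint to convert oscillation control into control of $\max_{V}u$ alone; your observation that $\int_{V}u\,d\mu=0$ together with $\mu_{\min}>0$ forbids a very negative minimum without a comparably large maximum is exactly the point that lets you get the favorable sign of $C_{2}$, which the paper's argument does not attempt. Your version is strictly stronger (exponential growth rather than controlled decay of the lower bound), trivially implies the statement as written, and is self-contained in that it does not invoke the Poincar\'e lemma; the paper's version is shorter and is all that is needed downstream, since in Case 2 of the proof of Theorem 2.1 the resulting linear term $C_{2}\|\nabla u\|_{L^{2}(V)}$ is absorbed by the quadratic Dirichlet term regardless of the sign of $C_{2}$. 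All of your constants ($\mu_{\min}$, $W$, $\mathrm{Vol}(V)$, $h_{\min}$) depend only on $G$ and $h$, as required.
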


\begin{proof}
    In view of \eqref{int}, we obtain the following inequality
\begin{equation}
    \left \| u\right \|_{L^{\infty }(V)}\leq \left \|  u\right \|_{L^{2}(V)}\mu _{\min }^{-\frac{1}{2}}.
\label{infty-2}
\end{equation}
By \eqref{infty-2} and Lemma \ref{Le2}, we have
\begin{equation*}
\begin{aligned}
    \int_{V}he^{u}d\mu &\geq  \left ( \min _{x\in V}h(x)\right )\text{Vol}(V) \exp{\left( -\left \| u\right \|_{L^{\infty }(V)} \right)}\\
   &\geq  \left ( \min _{x\in V}h(x)\right )\text{Vol}(V)\exp{\left(-\left \| u\right \|_{L^{2}(V)} \mu _{\min}^{-\frac{1}{2}} \right)} \\
    &\geq  \left ( \min _{x\in V}h(x)\right )\text{Vol}(V)\exp{\left(-\left \| \nabla u\right \|_{L^{2}(V)} C_{P}^{\frac{1}{2}} \mu _{\min}^{-\frac{1}{2}} \right)} . 
\end{aligned}  
\end{equation*}
Hence we obtain  with $\displaystyle{C_{1}=\left(\min_{x\in V}h(x)\right){\rm Vol}(V)}$ and $C_{2}=-C_{P}^{\frac{1}{2}}\mu^{-1/2}_{\min}$.
\end{proof}

${}$

{\it The proof of Theorem 2.1.} We first prove that the functional  $J_{ \beta }(u)$ is bounded in ${\bf H}$.

${}$

{\bf Case $\boldsymbol{1}$: $\boldsymbol{\beta \geq 0}$}. If $\beta=0$, then it is trivial. In the following we may assume $\beta>0 $ and $\text{const} \neq u\in{\bf H}$. Let
$$
\widetilde{u}=\frac{u}{\displaystyle{\left \| \nabla u \right \|_{L^{2}(V)}}}.
$$
Then $\widetilde{u}\in{\bf H}$ and $\left \| \nabla \widetilde u \right\|_{L^{2}(V)}=1$. By Lemma \ref{Le3}, one can find a positive constant $C$ depending only on $\theta $ and $G$ such that 
\begin{equation*}
\int_{V}e^{\theta  \widetilde{u}^{2}}d \mu  \leq C\left ( \theta, G\right ).    
\end{equation*}
For any $\varepsilon > 0$, one has
\begin{equation*}
\begin{aligned}
     \int_{V}e^{u}d\mu &\leq \int_{V} \exp{\left( \varepsilon \left \| \nabla u \right \|_{L^{2}(V)}^{2}+\frac{u^{2}}{4 \varepsilon \left \| \nabla u \right \|_{L^{2}(V)}^{2}}\right)}d\mu\\
      &=\exp{\left( \varepsilon \left \| \nabla u \right \|_{L^{2}(V)}^{2} \right)}\int_{V}\exp{\left( \frac{u^{2}}{4\varepsilon \left \| \nabla u \right \|_{L^{2}(V)}^{2}} \right)}d\mu  \\
    &\leq C\exp{\left( \varepsilon \left \| \nabla u \right \|_{L^{2}(V)}^{2} \right)}  
\end{aligned}
\end{equation*}
where $C$ is a constant depending only on $\varepsilon $ and $G$.
Hence, 
\begin{equation}
    \int_{V}he^{u}d\mu \leq C\left ( \max _{x\in V}h(x)\right )\exp{\left( \varepsilon \left \| \nabla u \right \|_{L^{2}(V)}^{2} \right)}.
\label{he^u11}
\end{equation}
It follows from \eqref{J_beta}, \eqref{he^u11} that 
\begin{equation*}
    J_{ \beta }(u)\geq \frac{1}{2}\int_{V}|\nabla u|^{2}d \mu-\beta \varepsilon \left \| \nabla u \right \|_{L^{2}(V)}^{2}+C_{1}
\end{equation*}
where $C_{1}$ is some constant depending only on $ \varepsilon ,\beta, h$ and $G$.
Choosing $\displaystyle{\varepsilon =\frac{1}{4\beta}}$, we have
\begin{equation}
      J_{ \beta }(u)\geq \frac{1}{4}\int_{V}|\nabla u|^{2}d \mu+C_{1}.
\label{J_b11}
\end{equation}

${}$

{\bf Case $\boldsymbol{2}$: $\boldsymbol{\beta < 0}$}. It follows from \eqref{J_beta} and \eqref{intheu} that 
\begin{equation}
     J_{ \beta }(u)\geq \frac{1}{2}\int_{V}|\nabla u|^{2}d \mu+C_{2}\left \| \nabla u \right \|_{L^{2}(V)}+C_{3}
\label{J_b12}
\end{equation}
where $C_{2}, C_{3}$ are some constants depending only on $ \beta, h$ and $G$. Hence, we obtain
\begin{equation*}
    J_{ \beta }(u)\geq C_{4},
\end{equation*}
where $C_{4}$ is some constant depending only on  $ \beta, h$ and $G$.

${}$

Therefore  $J_{ \beta }(u)$ has a lower bound in ${\bf H}$. This allows us to consider 
\begin{equation*}
    b:=\inf_{u\in{\bf H}} J_{ \beta }(u).
\end{equation*}
Take a sequence of functions $\{u_{j}\}_{j}\subset{\bf H}$ such that $J_{\beta }( u_{j})\rightarrow b$. It follows from \eqref{J_b11} and \eqref{J_b12} that for any $\beta \in \mathbb{R}$, there exists a constant $\widetilde{C}$ depending only on $\beta,h $ and $G$ such that 
\begin{equation*}
    \left \| \nabla u_{j}\right \|_{L^{2}(V)}\leq \widetilde{C}.
\end{equation*}
By Lemma \ref{Le2}, $\{u_{j}\}_{j}$ is bounded in ${\bf W}^{1,2}(V)$. According to Lemma \ref{Le1}, up to a subsequence, $u_{j}\to u$ in ${\bf W}^{1,2}(V)$. It is easy to see that  $u\in{\bf H}$ and $J_{ \beta }(u)= b$. Based on variational principle, we obtain that  for any $\phi \in{\bf H}$, there holds 
\begin{equation*}
  \begin{aligned}
       0&=\frac{d}{dt}\Big{|}_{t=0} J_{\beta }(u+t\phi )\\
      &=\frac{d}{dt}\Big{|}_{t=0}\left ( \frac{1}{2}\int_{V}|\nabla \left ( u+t\phi \right)|^{2}d \mu
       -\beta \log\int_{V}he^{u+t\phi}d \mu\right)  \\
    &=-\int_{V}\left(\Delta u+\frac{\beta he^{u}} {\displaystyle{\int_{V}he^{u}d \mu}}\right)\phi\!\ d\mu.
  \end{aligned}
\end{equation*}
Define by
$$
{\bf H}^{\perp} =\left\{ u \in{\bf F}\Bigg|\int_{V}uv\!\ d \mu =0, \ \forall \ v \in{\bf H}\right\}
$$
the complement space of ${\bf H}$. Then 
\begin{equation}
    \Delta u+\frac{\beta he^{u}}{\displaystyle{\int_{V}he^{u}d \mu}}\in{\bf H}^{\perp }.
\label{belong1}
\end{equation}
We claim that 
\begin{equation}
   H^{\perp }=\left \{\text{const}\right \}.
\label{H^perp}
\end{equation}
Indeed, it suffices to prove that ${\bf H}^{\perp } \subseteq \{\text{const}\}$. Let $v \in{\bf H}^{\perp } $ and $\overline{v}$ be its average over $V$. Since $(v-\overline{v})\in{\bf H}$ and $\overline{v}\in{\bf H}^{\perp}$, it follows that
\begin{equation*}
   0=\int_{V}\left(v-\overline{v}\right)v\!\ d\mu -\int_{V}\left(v-\overline{v}\right)\overline{v}\!\ d\mu=\int_{V}\left(v-\overline{v}\right)^{2}d\mu\geq 0.
\end{equation*}
Hence, we must have $v\equiv \overline{v}$. 
It follows from \eqref{belong1} and \eqref{H^perp} that
\begin{equation}
     \Delta u=-\frac{\beta he^{u}}{\displaystyle{\int_{V}he^{u}d \mu }}+\xi  
\label{xi1}
\end{equation}
where $\xi  $ is some constant. Integrating \eqref{xi1} on both sides, we have
$$
\xi =\frac{\beta}{{\rm Vol}(V)}. 
$$
Therefore, the Kazdan-Warner equation \eqref{kw_1} has a solution $u\in{\bf H}$.


\section{Proof of Theorem 2.2}

To prove Theorem \ref{Th2}, similar to the proof of Theorem \ref{Th1}, we first prove that the functional  $J_{ \alpha ,\beta }(u)$ is bounded in ${\bf H}$.

${}$

{\bf Case $\boldsymbol{1}$: $\boldsymbol{\beta \geq 0}$}. If $\beta=0$, then it is trivial. In the following we may assume $\beta>0$ and $0 \neq u\in{\bf H}$. Let
$$
v=\frac{u}{||u||_{1, \alpha}}. 
$$
Then $v\in{\bf H}$ and $||v||_{1, \alpha }=1$.
We claim that there exists a positive constant $C$ depending only on $\alpha ,\theta $ and $G$ such that 
\begin{equation*}
\int_{V}e^{\theta  v^{2}}d \mu  \leq C\left ( \alpha ,\theta ,G\right ).  
\end{equation*}
Indeed, since $|| \cdot||_{1, \alpha }$ is equivalent to the Sobolev norm $|| \cdot ||_{{\bf W}^{1,2}(V)}$, there exists a constant $K$ depending only on $\alpha $ and $G$ such that for all functions $v\in{\bf H}$,
\begin{equation}
    \int_{V}|\nabla v|^{2}d \mu  \leq K\left \| v \right \|_{1, \alpha }^{2}.
    \label{1,a}
\end{equation}
Let
$$
\widetilde{v}=\frac{v}{\sqrt{K}|| v ||_{1, \alpha }}. 
$$
Then $\widetilde{v} \in  {\bf H}$ and $\displaystyle{\int_{V}|\nabla \widetilde{v}|^{2}d \mu  \leq 1}$. By Lemma \ref{Le3}, one can find a positive constant $C$ depending only on $\theta $ and $G$ such that 
\begin{equation*}
\int_{V}e^{\theta  \widetilde{v}^{2}}d \mu  \leq C\left ( \theta, G\right ).    
\end{equation*}
Hence, we obtain
\begin{equation*} 
   \int_{V}e^{\theta  v^{2}}d \mu =\int_{V}\exp{\left(\theta  \widetilde{v}^{2}\cdot K\left \| v \right \|_{1, \alpha }^{2}\right)}d \mu  \leq e^{K}C\left ( \theta ,G\right ).
\end{equation*}
For any $\varepsilon > 0$, one has
\begin{equation*} 
\begin{aligned}
     \int_{V}e^{u}d\mu &\leq \int_{V}\exp{\left(\varepsilon \left \| u \right \|_{1, \alpha }^{2}+\frac{u^{2}}{4 \varepsilon \left \| u \right \|_{1, \alpha }^{2}}\right)}d\mu\\
     &=\exp{\left(\varepsilon \left \| u \right \|_{1, \alpha }^{2}\right)}\int_{V}\exp{\left(\frac{u^{2}}{4\varepsilon \left \| u \right \|_{1, \alpha }^{2}}\right)}d\mu  \\
     &\leq C\exp{\left(\varepsilon \left \| u \right \|_{1, \alpha }^{2}\right)}
\end{aligned}
\end{equation*}
where $C$ is some constant depending only on $ \alpha ,\varepsilon $ and $G$. Hence
\begin{equation}
    \int_{V}he^{u}d\mu \leq C\left ( \max _{x\in V}h(x)\right )\exp{\left(\varepsilon \left \| u \right \|_{1, \alpha }^{2}\right)}.
\label{he^u21}
\end{equation}
It follows from \eqref{J_a,b} and \eqref{he^u21} that 
\begin{equation*}
    J_{ \alpha ,\beta }(u)\geq \frac{1}{2} \left \| u \right \|_{1, \alpha }^{2}-\beta \varepsilon  \left \| u \right \|_{1, \alpha }^{2}+C_{1}
\end{equation*}
where $C_{1}$ is some constant depending only on $ \varepsilon ,\alpha,\beta, h$ and $G$.
By choosing $\displaystyle{\varepsilon =\frac{1}{4\beta}}$ and considering \eqref{1,a}, we have
\begin{equation}
      J_{ \alpha ,\beta }(u)\geq \frac{1}{4K} \int_{V}|\nabla u|^{2}d \mu+C_{1}.
\label{J_a,b21}
\end{equation}

${}$

{\bf Case $\boldsymbol{2}$: $\boldsymbol{\beta < 0}$}. It follows from \eqref{J_a,b}, \eqref{intheu} and \eqref{1,a} that 
\begin{equation}
     J_{ \alpha ,\beta }(u)\geq \frac{1}{2K}\left \| \nabla u \right \|_{L^{2}(V)}^{2}+C_{2}\left \| \nabla u \right \|_{L^{2}(V)}+C_{3}
\label{J_a,b22}
\end{equation}
where $C_{2},C_{3}$ are some constants depending only on $ \alpha, \beta, h$ and $G$. Hence, we obtain
\begin{equation*}
    J_{ \alpha ,\beta }(u)\geq C_{4}
\end{equation*}
where $C_{4}$ is some constant depending only on $\alpha, \beta, h$ and $G$.

${}$

Therefore  $J_{ \alpha ,\beta }(u)$ has a lower bound on ${\bf H}$. This allows us to consider 
\begin{equation*}
    b=\inf_{u\in H}J_{ \alpha ,\beta }(u).
\end{equation*}
Take a sequence of functions $\{u_{j}\}_{j}\subseteq{\bf H}$ such that $J_{\alpha ,\beta }\left ( u_{j}\right )\rightarrow b$. It follows from \eqref{J_a,b21} and \eqref{J_a,b22} that for any $\beta \in \mathbb{R}$, there exists a constant $\widetilde{C}$ depending only on $\alpha, \beta,h$ and $G$ such that 
\begin{equation*}
    \left \| \nabla u_{j}\right \|_{L^{2}(V)}\leq \widetilde{C}.
\end{equation*}
By Lemma \ref{Le2}, $\{u_{j}\}_{j}$ is bounded in ${\bf W}^{1,2}(V)$. By Lemma \ref{Le1}, up to a subsequence, $u_{j}\to u$ in ${\bf W}^{1,2}(V)$. It is easy to see that  $u\in  {\bf H}$ and $J_{ \alpha ,\beta }(u)= b$.
Based on variational principle, we obtain that for any $\phi \in {\bf H}$, there holds 
\begin{equation*}
  \begin{aligned}
       0&=\frac{d}{dt}\Big{|}_{t=0} J_{\alpha ,\beta }(u+t\phi)\\
       &=\frac{d}{dt}\Big{|}_{t=0}\left ( \frac{1}{2}\int_{V}|\nabla \left ( u+t\phi \right)|^{2}d \mu -\frac{\alpha }{2}\int_{V}(u+t\phi )^{2}d\mu-\beta \log\int_{V}he^{u+t\phi}d \mu\right)\\
    &=-\int_{V}\left(\Delta u+\alpha u+\frac{\beta h e^{u} }{\displaystyle{\int_{V}he^{u}d \mu}}\right)\phi \!\ d\mu .  
  \end{aligned}
\end{equation*}
Hence, we have
\begin{equation*}
    \Delta u+\alpha u+\frac{\beta h e^{u} }{\displaystyle{\int_{V}he^{u}d \mu}} \in{\bf H}^{\perp}.
\end{equation*}
By \eqref{H^perp}, we obtain
\begin{equation}
     \Delta u+\alpha u=-\frac{\beta he^{u}}{\displaystyle{\int_{V}he^{u}d \mu }}+\xi 
\label{xi2}
\end{equation}
where $\xi$ is a constant. Integrating \eqref{xi2} on both sides, we have
$$
\xi =\frac{\beta }{{\rm Vol}(V)}. 
$$
Therefore, the Kazdan-Warner equation \eqref{kw_2} has a solution $u\in{\bf H}$.

\section{Proof of Theorem 2.3}

Recall that $\lambda _{1}(V)< \lambda _{2}(V)< \cdots< \lambda_{m-1}(V)$ are all distinct non-zero eigenvalues of $-\Delta$,  $E_{\lambda _{k}(V)}$ are the eigenfuction space with respect to $\lambda _{k}(V)$, $\left \{u_{ki}\right \}_{i=1}^{n_{k}}$ is an orthonormal basis of $ E_{\lambda _{k}(V)} (k=1, \cdots, m-1)$ and 
\begin{equation*}
\begin{aligned}
     E_{k}&:=E_{\lambda _{1}(V)} \bigoplus \cdots \bigoplus E_{\lambda _{k}(V)},\\
    E_{k}^{\perp } &:= \text{the complement of} \ E_{k} \ = \ \left \{ u \in{\bf H} \Bigg|\int_{V}uv\!\ d \mu =0, \ \forall \ v \in E_{k}\right \}.
\end{aligned}  
\end{equation*}
Now we consider the case of $\alpha \geq \lambda _{1}(V)$.

\subsection{Proof of Theorem 2.3: (i)}

Fix $\alpha > \lambda _{1}(V)$. Take $u_{1}$ as an eigenfunction with respect to $\lambda_{1}(V)$. Then $u_{1}\in {\bf H}$. Obviously, we have
\begin{equation*}
    \int_{V}|\nabla u_{1}|^{2}d \mu=\lambda _{1}(V)\int_{V}u_{1}^{2}d\mu
\end{equation*}
and thus 
\begin{equation*}
    \int_{V}\left(|\nabla u_{1}|^{2}-\alpha u_{1}^{2}\right)d \mu< 0.
\end{equation*}
For any $t>0$, one has
\begin{equation}
    J_{ \alpha ,\beta }(tu_{1})=\frac{t^{2}}{2}\int_{V}\left(|\nabla u_{1}|^{2}-\alpha u_{1}^{2}\right)d \mu-\beta \log\int_{V}he^{tu_{1}}d \mu.
\label{tu_1}
\end{equation}

${}$

{\bf Case $\boldsymbol{1}$: $\boldsymbol{\beta > 0}$.} Since $\displaystyle{\int_{V}u_{1}d\mu =0}$ and $u_{1}\not\equiv 0$, it follows that there exists $x_{0}\in V$ such that 
\begin{equation*}
   u_{1}(x_{0})>0.
\end{equation*}
Hence
\begin{equation}
\begin{aligned}
    J_{ \alpha ,\beta }(tu_{1})&\leq -\beta \log \Big{(}h(x_{0})\mu (x_{0})\exp \left ( tu_{1}(x_{0})\right ) \Big{)}\\
    &\leq -\beta u_{1}(x_{0})t-\beta \log\Big{(}h(x_{0})\mu (x_{0}) \Big{)}\\
    &\rightarrow -\infty \ (t\rightarrow +\infty ).
\end{aligned}
\label{>0}
\end{equation}
${}$

{\bf Case $\boldsymbol{2}$: $\boldsymbol{\beta\leq 0}$.} Considering \eqref{infty-2}, we have
\begin{equation*}
\begin{aligned} 
    \int_{V}e^{tu_{1}}d\mu &\leq \text{Vol}(V) \exp \left ( t\left \| u_{1} \right \|_{L^{\infty}(V)} \right )\\
    &\leq \text{Vol}(V) \exp \left ( t\left \| u_{1} \right \|_{L^{2}(V)} \mu _{\min }^{-\frac{1}{2}}\right ). \\
\end{aligned}
\end{equation*}
Hence
\begin{equation}
    \int_{V}he^{tu_{1}}d\mu \leq \Big{(}\max _{x\in V}h(x)\Big{)}\text{Vol}(V)\exp \left ( t\left \| u_{1} \right \|_{L^{2}(V)} \mu _{\min }^{-\frac{1}{2}}\right ).
\label{he^u23}
\end{equation}
It follows from \eqref{tu_1} and \eqref{he^u23} that 
\begin{equation*}
\begin{aligned}
    J_{ \alpha ,\beta }(tu_{1})&\leq\frac{t^{2}}{2}\int_{V}\left(|\nabla u_{1}|^{2}-\alpha u_{1}^{2}\right)d \mu -\beta \left \| u_{1} \right \|_{L^{2}(V)}\mu _{\min }^{-\frac{1}{2}} t+C\\
    &\rightarrow -\infty \ \ \ (\text{as} \ t\rightarrow +\infty ),
\end{aligned}
\end{equation*}
where $C$ is some constant depending only on $\beta, h$ and $G$.

${}$

Therefore, we obtain $\displaystyle{\inf_{u\in{\bf H}} J_{ \alpha ,\beta }(u) =- \infty}$.

\subsection{Proof of Theorem 2.3: (ii)} 

Before proving (ii) of Theorem 2.3, we give some properties of the functional space $E_{k}$.

\begin{lemma}
    Let $E_{k}^{\perp}$ be defined as \eqref{E_kp}. Then 
\begin{equation}
    E_{k}^{\perp}=E_{\lambda _{k+1}(V)}\bigoplus \cdots \bigoplus E_{\lambda _{m-1}(V)}.\label{p=}
\end{equation}
\end{lemma}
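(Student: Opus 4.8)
The plan is to treat $-\Delta$ as a self-adjoint operator on the finite-dimensional inner product space $(\mathbf{F}(V),\langle\cdot,\cdot\rangle)$, where $\langle u,v\rangle=\int_V uv\,d\mu$, and then to read \eqref{p=} directly off its spectral decomposition. The first step is a Green's formula: summing by parts and using the symmetry $w_{xy}=w_{yx}$, one checks that $\int_V(-\Delta u)v\,d\mu=\int_V\Gamma(u,v)\,d\mu=\int_V u(-\Delta v)\,d\mu$ for all $u,v\in\mathbf{F}(V)$. Hence $-\Delta$ is self-adjoint with respect to $\langle\cdot,\cdot\rangle$, and by the spectral theorem $\mathbf{F}(V)$ splits as an orthogonal direct sum of its eigenspaces.

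Next I would identify these eigenspaces. Because $G$ is connected, the kernel of $-\Delta$ (the eigenspace for eigenvalue $0$) is exactly the constants, so that $\mathbf{F}(V)=\{\mathrm{const}\}\oplus E_{\lambda_1(V)}\oplus\cdots\oplus E_{\lambda_{m-1}(V)}$ is an orthogonal direct sum. Since $\mathbf{H}$ is precisely the orthogonal complement of the constants, which is the content of \eqref{H^perp}, it follows that $\mathbf{H}=E_{\lambda_1(V)}\oplus\cdots\oplus E_{\lambda_{m-1}(V)}$. The basic consequence of self-adjointness that I will use repeatedly is that eigenfunctions associated with distinct eigenvalues are $L^{2}(\mu)$-orthogonal.

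With this decomposition in hand, \eqref{p=} reduces to two inclusions. For the forward inclusion, any $u\in E_{\lambda_{k+1}(V)}\oplus\cdots\oplus E_{\lambda_{m-1}(V)}$ lies in $\mathbf{H}$ and, by the orthogonality of distinct eigenspaces, satisfies $\int_V uv\,d\mu=0$ for every $v\in E_k=E_{\lambda_1(V)}\oplus\cdots\oplus E_{\lambda_k(V)}$; hence $u\in E_k^{\perp}$. For the reverse inclusion, I would use that $\mathbf{H}=E_k\oplus\bigl(E_{\lambda_{k+1}(V)}\oplus\cdots\oplus E_{\lambda_{m-1}(V)}\bigr)$ is an orthogonal splitting, so any $u\in\mathbf{H}$ orthogonal to all of $E_k$ must have vanishing component in $E_k$ and therefore lie in the remaining summand.

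Since every step is finite-dimensional linear algebra, there is no serious analytic obstacle; the only points requiring care are the summation-by-parts identity establishing self-adjointness and the verification that the eigenspace for eigenvalue $0$ is one-dimensional, which is where connectedness of $G$ enters. Once these are in place, \eqref{p=} is immediate from the orthogonal eigenspace decomposition.
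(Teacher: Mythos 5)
Your argument is correct and follows essentially the same route as the paper: the paper's own proof is a one-line appeal to the orthogonal eigenspace decomposition ${\bf H}=E_{\lambda _{1}(V)}\bigoplus\cdots\bigoplus E_{\lambda _{m-1}(V)}=E_{k}\bigoplus E_{k}^{\perp}$, which is exactly the decomposition you establish. The only difference is that you supply the details the paper leaves implicit (self-adjointness of $-\Delta$ via summation by parts, connectedness forcing the kernel to be the constants, and the two inclusions), which is a welcome but not substantively different elaboration.
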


\begin{proof}
    Note that ${\bf H}=E_{\lambda _{1}(V)} \bigoplus \cdots \bigoplus E_{\lambda _{m-1}(V)}=E_{k}\bigoplus E_{k}^{\perp}$. By the definition of $E_{k}$, we have our desired result.
\end{proof}

Define by 
\begin{equation}
    (E_{k}^{\perp })^{\perp }:=\left\{u \in {\bf F} (V) \Bigg| \int_{V} uv\!\ d\mu=0, \ \forall \ v \in E_{k}^{\perp } \right\}\label{E_kpp}
\end{equation}
the complement space of $E^{\perp}_{k}$.

\begin{lemma}
    Let $(E_{k}^{\perp })^{\perp }$ be defined as \eqref{E_kpp}. Then 
\begin{equation}
    (E_{k}^{\perp })^{\perp }=\left\{{\rm const}\right\}\bigoplus E_{k}.\label{pp=}
\end{equation}
\end{lemma}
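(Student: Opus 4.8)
The plan is to exploit the orthogonal eigendecomposition of the whole function space $\mathbf{F}(V)$ induced by $-\Delta$, and then read off the orthogonal complement of $E_{k}^{\perp}$ component by component. First I would record that the pairing $\langle u,v\rangle:=\int_{V}uv\!\ d\mu$ is an inner product on the finite-dimensional space $\mathbf{F}(V)$, and that $-\Delta$ is self-adjoint with respect to it; this is where the symmetry assumption $w_{xy}=w_{yx}$ enters, through the summation-by-parts identity $\int_{V}(-\Delta u)v\!\ d\mu=\int_{V}\Gamma(u,v)\!\ d\mu=\int_{V}u(-\Delta v)\!\ d\mu$. Consequently the eigenspaces attached to distinct eigenvalues are mutually $\langle\cdot,\cdot\rangle$-orthogonal, and because $G$ is connected the kernel of $-\Delta$ is exactly the constants. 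This yields the orthogonal decomposition
\begin{equation*}
\mathbf{F}(V)=\left\{{\rm const}\right\}\oplus E_{\lambda_{1}(V)}\oplus\cdots\oplus E_{\lambda_{m-1}(V)},
\end{equation*}
in which $\mathbf{H}=E_{\lambda_{1}(V)}\oplus\cdots\oplus E_{\lambda_{m-1}(V)}$ and, by \eqref{p=}, $E_{k}^{\perp}=E_{\lambda_{k+1}(V)}\oplus\cdots\oplus E_{\lambda_{m-1}(V)}$.

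With this in hand the argument splits into the two usual inclusions. For $\left\{{\rm const}\right\}\oplus E_{k}\subseteq(E_{k}^{\perp})^{\perp}$, I would take $w=c+u$ with $c$ constant and $u\in E_{k}$ and test against an arbitrary $v\in E_{k}^{\perp}$: the constant part contributes $c\int_{V}v\!\ d\mu=0$ since $v\in\mathbf{H}$ has zero mean, while $\int_{V}uv\!\ d\mu=0$ by the orthogonality of $E_{k}$ and $E_{k}^{\perp}$ recorded above. For the reverse inclusion I would expand an arbitrary $w\in(E_{k}^{\perp})^{\perp}$ in the orthogonal decomposition of $\mathbf{F}(V)$ as $w=w_{0}+\sum_{j=1}^{m-1}w_{j}$ with $w_{0}$ constant and $w_{j}\in E_{\lambda_{j}(V)}$; pairing $w$ against each basis eigenfunction of $E_{\lambda_{j}(V)}$ for $j\geq k+1$ forces $w_{j}=0$ there, whence $w\in\left\{{\rm const}\right\}\oplus E_{k}$.

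The only genuinely delicate point is conceptual rather than computational: unlike the complement $E_{k}^{\perp}$, which is taken inside $\mathbf{H}$, the complement $(E_{k}^{\perp})^{\perp}$ in \eqref{E_kpp} is taken inside the full space $\mathbf{F}(V)$, and it is precisely this enlargement that makes the constants reappear in \eqref{pp=}. Everything else is bookkeeping against a fixed orthonormal eigenbasis, so no estimates beyond the self-adjointness of $-\Delta$ are required.
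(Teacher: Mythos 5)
Your proof is correct and follows essentially the same route as the paper: both arguments reduce to orthogonality bookkeeping in the decomposition $\mathbf{F}(V)=\{{\rm const}\}\oplus E_{\lambda_{1}(V)}\oplus\cdots\oplus E_{\lambda_{m-1}(V)}$, with the paper subtracting the mean and writing $v-\overline{v}=w_{1}+w_{2}$ in $E_{k}\oplus E_{k}^{\perp}$ while you expand component by component in the eigenbasis. You are somewhat more explicit (you prove the easy inclusion and justify self-adjointness, which the paper dismisses as obvious), but the substance is identical.
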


\begin{proof}
Obviously, it suffices to prove that $(E_{k}^{\perp })^{\perp } \subseteq\{{\rm const}\}\bigoplus E_{k}$. For any $v \in (E_{k}^{\perp })^{\perp }$, $(v-\overline{v})\in{\bf H}$. Assume 
$v-\overline{v}=w_{1}+w_{2},$
where $w_{1} \in E_{k}$ and  $w_{2} \in E_{k}^{\perp }$. Then we obtain
$$
\int_{V}w_{2}^{2}\!\ d\mu =\int_{V}(v-\overline{v}-w_{1})w_{2}\!\ d\mu =0.
$$
Hence, $ w_{2}=0 $ and $(v-\overline{v}) \in E_{k}$. Therefore, $v=\overline{v}+(v-\overline{v}) \in\{{\rm const}\}\bigoplus E_{k}$.
\end{proof}

${}$

{\it The proof of (ii) of Theorem 2.3.} Note that 
\begin{equation*}
    \int_{V}\left(|\nabla u_{1}|^{2}- \lambda _{1}(V) u_{1}^{2}\right) d\mu =0.
\end{equation*}
Hence, when $\beta>0 $, we can also obtain \eqref{>0}. Apparently, $J_{ \lambda _{1}(V), 0 }$ has a minimizer $u_{1}$. By the proof of Theorem \ref{Th2}, our desired result follows immediately.

${}$

Next we consider the case $\beta < 0$. For any $0\neq u\in E_{1}^{\perp}$, one has
\begin{equation*}
    u=a_{21}u_{21}+\cdots+a_{2n_{2}}u_{2n_{2}}+\cdots+a_{m-1,1}u_{m-1,1}+\cdots+a_{m-1,n_{m-1}}u_{m-1,n_{m-1}}
\end{equation*}
where $a_{21}, \cdots, a_{2n_{2}},\cdots, a_{m-1,1},\cdots, a_{m-1,n_{m-1}}$ are some constants. Then 
\begin{equation*}
\begin{aligned}
     \left \| u\right \|_{L^{2}(V)}^{2}&=a_{21}^{2}+\cdots+a_{2n_{2}}^{2}+\cdots+a_{m-1,1}^{2}+\cdots+a_{m-1,n_{m-1}}^{2},\\
     \left \| \nabla u\right \|_{L^{2}(V)}^{2}&=\lambda _{2}(V)(a_{21}^{2}+\cdots+a_{2n_{2}}^{2})+\cdots+\lambda _{m-1}(V)(a_{m-1,1}^{2}+\cdots+a_{m-1,n_{m-1}}^{2}).
\end{aligned}
\end{equation*}
This leads to the following estimate
\begin{equation}
\begin{aligned}
  \int_{V}\left(|\nabla u|^{2}- \lambda _{1}(V) u^{2}\right) d \mu 
  &\geq \left(\lambda _{2}(V)-\lambda _{1}(V)\right)\left(a_{21}^{2}+\cdots+a_{2n_{2}}^{2}\right)+\cdots\\
  &+ \left(\lambda _{m-1}(V)-\lambda _{1}(V)\right)\left(a_{m-1,1}^{2}+\cdots+a_{m-1,n_{m-1}}^{2}\right)\\
  &\geq\frac{\lambda _{2}(V)-\lambda _{1}(V)}{\lambda _{2}(V)} \left \| \nabla u\right \|_{L^{2}(V)}^{2}.
\label{2-1}
\end{aligned}
\end{equation}

It follows from  \eqref{intheu} and \eqref{2-1} that 
\begin{equation}
    J_{ \lambda _{1}(V),\beta }(u)\geq \frac{\lambda _{2}(V)-\lambda _{1}(V)}{2\lambda _{2}(V)}\left \|\nabla u \right \|_{L^{2}(V)}^{2}+\beta C_{1} \left \| \nabla u \right \|_{L^{2}(V)}+C_{2}
\label{Jlambda1}
\end{equation}
where $C_{1}$ and $C_{2}$ are some constants depending only on $ \beta, h$ and $G$. Hence, we obtain
\begin{equation*}
      J_{ \lambda _{1}(V),\beta }(u)\geq C_{2},
\end{equation*}
where $C_{2}$ is some constant depending only on $ \beta, h$ and $G$. 
This permits us to consider 
\begin{equation*}
    b=\inf_{u\in E_{1}^{\perp}}J_{ \lambda _{1}(V),\beta }(u).
\end{equation*}
Take a sequence of functions $\{u_{j}\}_{j}\subseteq E_{1}^{\perp}$ such that $J_{\lambda _{1}(V) ,\beta }\left ( u_{j}\right )\rightarrow b$. It follows from \eqref{Jlambda1} that there exists a constant $\widetilde{C}$ depending only on $ \beta, h$ and $G$ such that 
$$\left \| \nabla u_{j}\right \|_{L^{2}(V)}\leq \widetilde{C}.$$
By Lemma \ref{Le2}, $\{u_{j}\}_{j}$ is bounded in ${\bf W}^{1,2}(V)$. By Lemma \ref{Le1}, up to a subsequence, $u_{j}\rightarrow u$ in ${\bf W}^{1,2}(V)$. It is easy to see that $u\in E_{1}^{\perp}$ and $J_{ \alpha ,\beta }(u)= b$. 
Based on variational principle, we have
\begin{equation*}
     \Delta u+\alpha u+\frac{\beta h e^{u} }{\displaystyle{\int_{V}he^{u}d \mu}} \in  (E_{1}^{\perp})^{\perp}.
\end{equation*}
By Lemma \ref{pp=}, we obtain
\begin{equation}
      \Delta u+\alpha u=-\frac{\beta he^{u}}{\displaystyle{\int_{V}he^{u}d \mu }}+\xi+\sum_{i=1}^{n_{1}}t_{i} u_{1i}
\label{eta}
\end{equation}
where $\xi $ and $t_{i}$ are constants. Integrating \eqref{eta} on both sides, we have
$$
\xi =\frac{\beta }{{\rm Vol}(V)}. 
$$
Multiplying $u_{1i}$ on both sides of \eqref{eta} and then integrating it on $V$, we obtain 
\begin{equation*}
    t_{i}=\frac{\displaystyle{\beta \int_{V}hu_{1i}e^{u}d\mu}}{\displaystyle{\int_{V}he^{u}d\mu}}.
\end{equation*}
Therefore, the Kazdan-Warner equation \eqref{kw_4} has a solution $u\in  E_{1}^{\perp}$.


\section{Proof of Theorem 2.6}

It follows from \eqref{lambdak+1} that if $\alpha <  \lambda _{k+1}(V)$, then
\begin{equation*}
      \left \|u \right \|_{k+1, \alpha }:=\left[\int_{V}\left(|\nabla u|^{2}- \alpha u^{2}\right)d \mu\right]^{\frac{1}{2}}
\end{equation*}
defines a Sobolev norm on ${\bf H}$. Since ${\bf H}$ is a finite dimensional linear space, $  \left \| \cdot \right \|_{k+1, \alpha }$ is equivalent to the Sobolev norm $\left \|  \cdot \right \|_{{\bf W}^{1,2}(V)}$. 

${}$

{\it The proof of Theorem 2.6.} We first prove that the functional  $J_{ \alpha ,\beta }(u)$ is bounded in $E_{k}^{\perp} $.

${}$

{\bf Case $\boldsymbol{1}$: $\boldsymbol{\beta \geq 0}$.} Assume $0 \neq u\in  E_{k}^{\perp }$. Let
$$
v=\frac{u}{||u||_{k+1, \alpha }}. 
$$
Then $v\in E_{k}^{\perp }$ and $\left \| v \right \|_{k+1, \alpha }=1$.
We claim that there exists a constant $C$ depending only on $k,\alpha ,\theta $ and $G$ such that 
\begin{equation*}
\int_{V}e^{\theta  v^{2}}d \mu  \leq C\left ( k,\alpha ,\theta ,G\right ).  
\end{equation*}
Indeed, since $ \left \| \cdot \right \|_{k+1, \alpha }$ is equivalent to the Sobolev norm $|| \cdot ||_{{\bf W}^{1,2}(V)}$, there exists a constant $K$ depending only on $k,\alpha $ and $G$ such that for all functions $v\in {\bf H}$,
\begin{equation}
    \int_{V}|\nabla v|^{2}d \mu  \leq K\left \| v \right \|_{k+1, \alpha }^{2}.
\label{k+1,a}
\end{equation}
Let 
$$
\widetilde{v}=\frac{v}{\sqrt{K}|| v ||_{k+1, \alpha }}. 
$$
Then $\widetilde{v} \in  E_{k}^{\perp }$ and $\displaystyle{\int_{V}|\nabla \widetilde{v}|^{2}d \mu  \leq 1}$. By Lemma \ref{Le3}, for any $\theta  \in  \mathbb{R} $, one can find a constant $C$ depending only on $\theta $ and $G$ such that 
\begin{equation*}
\int_{V}e^{\theta  \widetilde{v}^{2}}d \mu  \leq C\left ( \theta  ,G\right ).    
\end{equation*}
Hence
\begin{equation*}
   \int_{V}e^{\theta  v^{2}}d \mu =\int_{V}\exp \left (\theta  \widetilde{v}^{2}\cdot K\left \| v \right \|_{k+1, \alpha }^{2} \right )d \mu  \leq e^{K}C\left ( \theta  ,G\right ).
\end{equation*}
For any $\varepsilon > 0$, one has
\begin{equation*}
\begin{aligned}
     \int_{V}e^{u}d\mu &\leq \int_{V}\exp \left (\varepsilon \left \| u \right \|_{k+1, \alpha }^{2}+\frac{u^{2}}{4 \varepsilon \left \| u \right \|_{k+1, \alpha }^{2}} \right )d\mu\\
     &=\exp \left (\varepsilon \left \| u \right \|_{k+1, \alpha }^{2} \right )\int_{V}\exp \left (\frac{u^{2}}{4\varepsilon \left \| u \right \|_{k+1, \alpha }^{2}} \right )d\mu  \\
     &\leq C\exp \left ( \varepsilon \left \| u \right \|_{k+1, \alpha }^{2}\right )
\end{aligned}
\end{equation*}
where $C$ is some constant depending only on $ k,\alpha ,\varepsilon $ and $G$.
Hence, 
\begin{equation}
    \int_{V}he^{u}d\mu \leq C\left ( \max _{x\in V}h(x)\right )\exp \left ( \varepsilon \left \| u \right \|_{k+1, \alpha }^{2}\right ).
\label{heuk}
\end{equation}
It follows from \eqref{J_a,b} and \eqref{heuk} that 
\begin{equation*}
    J_{ \alpha ,\beta }(u)\geq \frac{1}{2} \left \| u \right \|_{k+1, \alpha }^{2}-\beta \varepsilon  \left \| u \right \|_{k+1, \alpha }^{2}+C_{1}
\end{equation*}
where $C_{1}$ is some constant depending only on $ \varepsilon ,\alpha,\beta, h$ and $G$.
Choosing $\displaystyle{\varepsilon =\frac{1}{4\beta}}$ and considering \eqref{k+1,a}, we have
\begin{equation}
      J_{ \alpha ,\beta }(u)\geq \frac{1}{4} \int_{V}\left \| \nabla u \right \|_{L^{2}(V)}+C_{1}.
\label{J_a,b31}
\end{equation}

${}$

{\bf Case $\boldsymbol{1}$: $\boldsymbol{\beta < 0}$.} It follows from \eqref{J_a,b}, \eqref{k+1,a} and  \eqref{intheu} that 
\begin{equation}
     J_{ \alpha ,\beta }(u) \geq \frac{1}{2K}\left \| \nabla u \right \|_{L^{2}(V)}^{2}+C_{2}\left \| \nabla u \right \|_{L^{2}(V)}+C_{3}
\label{J_a,b32}
\end{equation}
where $C_{2}, C_{3}$ are some constants depending only on $ \beta, h$ and $G$. Hence, we obtain
\begin{equation*}
      J_{ \alpha ,\beta }(u)\geq C_{4}.
\end{equation*}

${}$

Therefore  $J_{ \alpha ,\beta }(u)$ has a lower bound on $H$. This allows us to consider 
\begin{equation*}
    b:=\inf_{u\in E_{k}^{\perp }}J_{ \alpha ,\beta }(u).
\end{equation*}
Take a sequence of functions $\{u_{j}\}_{j}\subseteq E_{k}^{\perp }$ such that $J_{\alpha ,\beta }\left ( u_{j}\right )\to b$. It follows from \eqref{J_a,b31} and \eqref{J_a,b32} that for any $\beta \in \mathbb{R}$, there exists a constant $\widetilde{C}$ depending only on $ \alpha,\beta, h$ and $G$ such that 
\begin{equation*}
    \left \| \nabla u_{j}\right \|_{L^{2}(V)}\leq \widetilde{C}.
\end{equation*}
By Lemma \ref{Le2}, $\{u_{j}\}_{j}$ is bounded in ${\bf W}^{1,2}(V)$. By Lemma \ref{Le1}, up to a subsequence, $u_{j}\to u$ in ${\bf W}^{1,2}(V)$. It is easy to see that  $u\in  E_{k}^{\perp }$ and $J_{ \alpha ,\beta }(u)= b$. Based on variational principle, we have
\begin{equation*}
\Delta u+\alpha u+\frac{\beta he^{u}}{\displaystyle{\int_{V}he^{u}d \mu}} \in (E_{k}^{\perp })^{\perp }.
\end{equation*}
It follows from  Lemma \ref{E_kpp} that
\begin{equation}
     \Delta u+\alpha u=-\frac{\beta he^{u}}{\displaystyle{\int_{V}he^{u}d \mu }}+\xi+\sum_{s=1}^{k}\sum_{i=1}^{n_{s}}t_{si}u_{si}
\label{xi3}
\end{equation}
where $\xi$ and $t_{si}$ are some constants. Integrating \eqref{xi3} on both sides, we have
$$
\xi =\frac{\beta }{{\rm Vol}(V)}. 
$$
Multiplying $u_{si}$ on both sides of \eqref{xi3} and then integrating it on $V$, we obtain 
$$t_{si}=\frac{\displaystyle{\beta \int_{V}hu_{si}e^{u}d\mu}}{\displaystyle{\int_{V}he^{u}d\mu}}.
$$
Therefore, the Kazdan-Warner equation \eqref{kw_3} has a solution $u\in E_{k}^{\perp }$.

\section{Proof of Theorem 2.7}

In this section, we consider the case that $\alpha\geq\lambda_{k+1}(V)$.

\subsection{Proof of Theorem 2.7: (i)}

Fix $\alpha > \lambda _{k+1}(V)$. Take $u_{k+1}$ as an eigenfunction with respect to $\lambda _{k+1}(V)$ then $u_{k+1}\in E_{k}^{\perp } $ . Obviously, we have 
\begin{equation*}
    \int_{V}|\nabla u_{k+1}|^{2}d \mu=\lambda _{k+1}(V)\int_{V}u_{k+1}^{2}d\mu
\end{equation*}
and thus 
\begin{equation*}
    \int_{V}\left(|\nabla u_{k+1}|^{2}-\alpha u_{k+1}^{2}\right)d \mu<0.
\end{equation*}
For any $t>0$, one arrives at 
\begin{equation}
     J_{ \alpha ,\beta }(tu_{k+1})=\frac{t^{2}}{2}\int_{V}\left(|\nabla u_{k+1}|^{2}-\alpha u_{k+1}^{2}\right)d \mu-\beta \log\int_{V}he^{tu_{k+1}}d \mu.
\label{tuk+1}
\end{equation}

${}$

{\bf Case $\boldsymbol{1}$: $\boldsymbol{\beta > 0}$.} Since $\displaystyle{\int_{V}u_{k+1}\!\ d\mu =0}$ and $u_{k+1}\not\equiv0$, it follows that there exists $x_{0}\in V$ such that 
\begin{equation*}
    u_{k+1}(x_{0})>0.
\end{equation*}
Hence
\begin{equation}
\begin{aligned}
     J_{ \alpha ,\beta }(tu_{k+1})
     &\leq -\beta \log \Big{(}h(x_{0})\mu(x_{0})\exp({tu_{k+1}(x_{0})}\Big{)}\\
     &\leq -\beta u_{k+1}(x_{0})t-\beta \log \Big{(}h(x_{0})\mu(x_{0})\Big{)}\\
     &\rightarrow -\infty \ (t\rightarrow +\infty ).
\end{aligned}
\label{k+1}
\end{equation}

${}$

{\bf Case $\boldsymbol{1}$: $\boldsymbol{\beta \leq 0}$.} Considering \eqref{infty-2}, we have
\begin{equation*}
\begin{aligned}
    \int_{V}e^{tu_{k+1}}d\mu &\leq \text{Vol}(V) \exp\left (t\left \| u_{k+1} \right \|_{L^{\infty}(V)}\right )\\
    &\leq \text{Vol}(V) \exp\left (t\left \| u_{k+1} \right \|_{L^{2}(V)}\mu _{\min }^{-\frac{1}{2}}\right ).
\end{aligned}
\end{equation*}
Hence
\begin{equation}
    \int_{V}he^{tu_{k+1}}d\mu \leq \text{Vol}(V)\left ( \max _{x\in V}h(x)\right )\exp\left (t\left \| u_{k+1} \right \|_{L^{2}(V)}\mu _{\min }^{-\frac{1}{2}}\right ).
\label{he^u33}
\end{equation}
It follows from \eqref{tuk+1} and \eqref{he^u33} that 
\begin{equation*}
\begin{aligned}
  J_{ \alpha ,\beta }(tu_{k+1})&\leq\frac{t^{2}}{2}\int_{V}(|\nabla u_{k+1}|^{2}-\alpha u_{k+1}^{2})d \mu-\beta \left \|  u_{k+1} \right \|_{L^{2}(V)}\mu _{\min }^{-\frac{1}{2}}t+C_{2}\\
  &\rightarrow -\infty \ \ \ ( \text{as} \ t\rightarrow +\infty )
\end{aligned} 
\end{equation*}
where $C_{2}$ is some constant depending only on $\beta, h$ and $G$.

${}$

Therefore, we obtain $\displaystyle{\inf_{u\in E_{k}^{\perp }} J_{ \alpha ,\beta }(u) =- \infty}$.

\subsection{Proof of Theorem 2.7: (ii)}

Note that 
\begin{equation*}
    \int_{V}\left(|\nabla u_{k+1}|^{2}- \lambda _{k+1}(V) u_{k+1}^{2}\right)d \mu =0.
\end{equation*}
Hence, when $\beta>0 $, we can also obtain \eqref{k+1}. Apparently, $J_{ \lambda _{k+1}(V), 0 }$ has a minimizer $u_{k+1}$. By the proof of Theorem \ref{Th4}, our desired result follows immediately.

${}$

Next we consider the case $\beta < 0$. For any $0\neq u\in E_{k+1}^{\perp}$, 
\begin{eqnarray*}   u&=&\left(a_{k+2,1}u_{k+2,1}+\cdots+a_{k+2,k+2}u_{k+2,n_{k+2}}\right)+\cdots\\
     &&+ \ \left(a_{m-1,1}u_{m-1,1}+\cdots+a_{m-1,n_{m-1}}u_{m-1,n_{m-1}}\right)
\end{eqnarray*}
where 
\begin{equation*}
    a_{k+2,1},\cdots, a_{k+2,n_{k+2}} ,\cdots, a_{m-1,1},\cdots, a_{m-1,n_{m-1}}
\end{equation*}
are some constants. Then 
\begin{equation*}
\begin{aligned}
     \left \| u\right \|_{L^{2}(V)}^{2}&=\left(a_{k+2,1}^{2}+\cdots+a_{k+2,n_{k+2}}^{2}\right)+\cdots+\left(a_{m-1,1}^{2}+\cdots+a_{m-1,n_{m-1}}^{2}\right),\\
     \left \| \nabla u\right \|_{L^{2}(V)}^{2}&=\lambda _{k+2}(V)\left(a_{k+2,1}^{2}+\cdots+a_{k+2,n_{k+2}}^{2}\right)+\cdots\\
     &+\lambda _{m-1}(V)\left(a_{m-1,1}^{2}+\cdots+a_{m-1,n_{m-1}}^{2}\right).
\end{aligned}
\end{equation*}
This leads to the following estimate
\begin{equation}
\begin{aligned}
  &\int_{V}\left(|\nabla u|^{2}- \lambda _{k+1}(V) u^{2}\right) d \mu\\ 
  &\geq \left(\lambda _{k+2}(V)-\lambda _{k+1}(V)\right)\left(a_{k+2,1}^{2}+\cdots+a_{k+2,n_{k+2}}^{2}\right)+\cdots\\
  &+ \left(\lambda _{m-1}(V)-\lambda _{k+1}(V)\right)\left(a_{m-1,1}^{2}+\cdots+a_{m-1,n_{m-1}}^{2}\right)\\
  &\geq\frac{\lambda _{k+2}(V)-\lambda _{k+1}(V)}{\lambda _{k+2}(V)} \left \| \nabla u\right \|_{L^{2}(V)}^{2}.
\label{k2-1}
\end{aligned}
\end{equation}
It follows from \eqref{k2-1} and \eqref{intheu} that 
\begin{equation}
    J_{ \lambda _{k+1}(V),\beta }(u)\geq \frac{\lambda _{k+2}(V)-\lambda _{k+1}(V)}{2\lambda _{k+2}(V)}\left \|\nabla u \right \|_{L^{2}(V)}^{2}+\beta C_{1} \left \| \nabla u \right \|_{L^{2}(V)}+C_{2}
\label{Jlambdak+1}
\end{equation}
where $C_{1}$ and $C_{2}$ are some constants depending only on $ \beta, h$ and $G$. Hence, we obtain
\begin{equation*}
      J_{ \lambda _{1}(V),\beta }(u)\geq C_{3},
\end{equation*}
where $C_{3}$ is some constant depending only on $ \beta, h$ and $G$. This allows us to consider 
\begin{equation*}
    b:=\inf_{u\in E_{k+1}^{\perp}}J_{ \lambda _{k+1}(V),\beta }(u).
\end{equation*}
Take a sequence of functions $\{u_{j}\}_{j}\subseteq  E_{k+1}^{\perp}$ such that $J_{\lambda _{k+1}(V) ,\beta }\left ( u_{j}\right )\to b$. It follows from \eqref{Jlambda1} that there exists a constant $\widetilde{C}$ depending only on $ \beta,h$ and $G$ such that 
$$\left \| \nabla u_{j}\right \|_{L^{2}(V)}\leq \widetilde{C}.$$
By Lemma \ref{Le2}, $\{u_{j}\}_{j}$ is bounded in ${\bf W}^{1,2}(V)$. By Lemma \ref{Le1}, up to a subsequence, $u_{j}\to u$ in ${\bf W}^{1,2}(V)$. It is easy to see that  $u\in   E_{k+1}^{\perp}$ and $J_{ \alpha ,\beta }(u)= b$.
Based on variational principle, we have
\begin{equation*}
\Delta u+\alpha u+\frac{\beta he^{u}}{\displaystyle{\int_{V}he^{u}d \mu}} \in (E_{k+1}^{\perp })^{\perp }.
\end{equation*}
It follows from  Lemma \ref{E_kpp} that
\begin{equation}
     \Delta u+\alpha u=-\frac{\beta he^{u}}{\displaystyle{\int_{V}he^{u}d \mu }}+\xi+\sum_{s=1}^{k+1}\sum_{i=1}^{n_{s}}t_{si}u_{si}
\label{xi6}
\end{equation}
where $\xi$ and $t_{si}$ are some constants. Integrating \eqref{xi6} on both sides, we have
$$
\xi =\frac{\beta }{\text{Vol}(V)}. 
$$
Multiplying $u_{si}$ on both sides of \eqref{xi6} and then integrating it on $V$, we obtain 
$$t_{si}=\frac{\displaystyle{\beta \int_{V}hu_{si}e^{u}d\mu}}{\displaystyle{\int_{V}he^{u}d\mu}}.$$
Therefore, the Kazdan-Warner equation \eqref{kw_5} has a solution $u\in E_{k+1}^{\perp} $.


\end{document}